\newcommand{\uu}{\mathbf u}
\newcommand{\ff}{\mathbf f}
\newcommand{\inn}{\textrm{ in }}
\newcommand{\on}{\textrm{ on }}
\newcommand{\bg}{\mathbf g}
\newcommand{\n}{\mathbf n}
\newcommand{\xx}{\mathbf x}
\newcommand{\vv}{\mathbf v}
\newcommand{\VV}{\mathbf V}
\newcommand{\WW}{\mathbf W}
\newcommand{\ww}{\mathbf w}
\newcommand{\TT}{\mathbf E_{sm}}
\newcommand{\T}{E_{sm}}
\newcommand{\GG}{\mathbf G_h}
\newcommand{\osigma}{\overline\sigma}
\newcommand{\tensor}[1]{\overline{\overline{#1}}}
\newcommand{\sci}[1]{\times 10^{#1}}
\newcommand{\zz}{\mathbf z}
\newcommand{\qq}{\mathbf q}
\newtheorem{proposition}{Proposition}
\newtheorem{algorithm}{Algorithm}
\newtheorem*{remark}{Remark}
\title{A Coupling Approach for Linear Elasticity Problems with Spatially Noncoincident Interfaces}
\author{Pavel Bochev, James Cheung,  Max Gunzburger, and Mauro Perego}
\date{}
\begin{document}
	\maketitle
	\begin{abstract}
		We present a new formulation based on the classical Dirichlet--Neumann
		formulation for interface coupling problems in linearized elasticity. 
		By using Taylor series expansions, we derive a new set of interface
		conditions that allow our formulation to pass the linear consistency test.
		In addition, we propose an iterative method to determine the solution
		of our formulation. We demonstrate in our numerical results that
		we may achieve the desired piecewise linear finite element error bounds for
		both nonoverlapping domain decomposition problems as well as for
		interface coupling problems where the Lam\'e parameters of the structures
		differ.
	\end{abstract}
	\section{Introduction}
	In many important physical applications, two systems are coupled together 
	through a physical interface. It is on this interface that physical quantities,
	such as stresses, are transferred between the two systems. Mathematically, 
	such systems are modeled by two sets of equations with a set of additional conditions
	posed at the interface. We refer to these formulations as \emph{interface coupling}
	formulations. If the equations are the same on both sides of this interface,
	the interface coupling formulations fall under a class of problems called
	\emph{non--overlapping domain decomposition} formulations. 
	
	In the continuous setting, the interface exists only as a single curve that lies
	between the two problem subdomains; however, in the finite element setting,
	this is generally not the case. Often, the two subdomains of the interface
	coupling problem are meshed separately, and the single interface in the
	continuous setting becomes two distinct interfaces. If the positions of the
	nodes in the two interfaces are matching, then there exists no problem
	in determining the solution of the discretized problem. Most often,
	classical methods for determining solutions to interface coupling problems
	in the continuous setting
	may be extended to the finite element formulation in this case. A thorough
	exposition of classical interface coupling methods may be found in 
	\cite{quarteroni1999domain} and \cite{toselli2005domain}.
	
	If the two interfaces
	are spatially coincident with nonmatching nodes, one may introduce various
	operators that map values from one interface to the other without much difficulty.
	In fact, there exists an entire body of literature that discusses solution methods
	for this kind of discrete interface problem. Among the methods that deal with 
	this kind of nonmatching interface problem, one of the most studied and utilized
	methods are mortar element methods \cite{bernardi1994new,
	wohlmuth2000mortar}. 

	 However, if the continuous interface
	is a curved surface, the interfaces generated by separate meshing will often
	not spatially coincide. The most prominent issue in this setting is that the interface
	conditions defined in the continuous setting has little meaning
	since there is no clear definition of an interface condition when the discrete 
	interfaces are spatially noncoincident. When comparing the volume of literature
	available for solution methods of the case of spatially noncoincident interface
	coupling, one will find that the amount of available literature on this subject
	is quite sparse. Most often, the methods discussed in the previous paragraph
	are generalized to this case.

%
	A desirable property of any discrete approximation to its continuous counterpart
	is that it is consistent. This means that the behavior of the discrete model should
	match the behavior of the continuous model. A common approach to test whether
	a discrete formulation is consistent is whether or not it can reproduce $k$--degree
	polynomials, where $k$ is the polynomial degree used in the approximation space.
	As of current, no method in the literature is $k$--degree consistent; however, there
	are a few that are linearly consistent. A novel approach presented in
	\cite{laursen2003consistent} uses an energy correction
	approach to remove excess energy that arises from the overlaps generated from
	the spatially noncoincident interfaces and adds additional energy to the deficit
	incurred by the gaps between the noncoincident interfaces. The complexity
	of the implementation of this method may be prohibitive since a mesh--like structure
	must be constructed between the discrete interfaces to link the two interfaces. 
	The method proposed in \cite{parks2007novel}
	removes this complexity by perturbing the meshes at the discrete interfaces so that
	the area of the gaps and overlaps in the subdomain meshes are equal. 
	\cite{day2008analysis} removes this perturbation requirement by requiring that the subdomain
	meshes are only overlapping. Because these methods are based on minimizing a variational
	principle over the entire problem domain, their applicability is limited to cases
	where the material constant on both sides of the domain are the same.

	In this work, we present an interface coupling approach for spatially noncoincident
	interfaces for the case of coupled linearized elasticity equations. Through the use
	of Taylor series expansions, we introduce a new set of interface conditions 
	based on the classical Dirichlet--Neumann formulation that 
	allow us to pass the linear patch test in the domain decomposition problem. 
	Subsequently, we introduce an iterative solution method that allows us to 
	solve this formulation. The iterative method is particularly useful if 
	a coupled solution must be determined from separate codes. 
	To ensure that we achieve the expected accuracy from 
	piecewise linear finite element methods, we utilize the Zhang--Naga gradient
	recovery operator \cite{zhang2005new} to recover a second order accurate stress tensor as Neumann data
	in the coupled formulation. 
	An 
	additional benefit of this new formulation is that, unlike the methods described 
	in the previous paragraph, this method requires no intermediate meshing procedure,
	nor does it require any sort of mesh perturbation in the neighborhood of the
	discrete interfaces. In addition, this coupling approach is also applicable to
	interface coupling problems where the Lam\'e parameters differ between
	the coupled subdomains. 
	
	The remainder of this paper will be structured as follows. In Sect.
	\ref{problem statement} we establish the framework of our problem and 
	define our monolithic coupling formulation. In Sect. \ref{iterative solution method},
	we introduce an iterative solution method for the coupling formulation. In 
	Sect. \ref{Numerical Results}, we will demonstrate that our coupling 
	formulation achieves the desired finite element error bounds for both 
	interface coupling and domain decomposition problems. And finally,
	in Sect. \ref{conclusion} we will provide some concluding statements
	and provide some insight in the future directions of this work. An appendix
	is provided at the end of this paper to give the reader insight on the 
	gradient recovery method utilized in this work.
	
	\section{Statement of the Interface Problem} \label{problem statement}
	Assume that $\Omega$ is a domain; i.e., a simply connected bounded subset of $\mathbb R^2$
	with a Lipschitz continuous boundary.  We shall denote the boundary of $\Omega$ as
	$\partial \Omega$. In addition, let $\uu$ denote the
	virtual displacement of a material characterized by its Lam\'e parameters
	$\lambda$ and $\mu$. Then, using standard notation, we define
	\begin{equation*}
		\varepsilon(\uu) = \frac12 \left( \nabla \uu + (\nabla \uu)^T \right)
	\end{equation*}
	as the strain tensor, and 
	\begin{equation*}
		\sigma(\uu) = \lambda \textrm{ tr }\varepsilon(\uu) \mathbf I + 2 \mu \varepsilon(\uu)
	\end{equation*}
	as the stress tensor, where $\textrm{tr}(\cdot)$ denotes the matrix trace operator. 
	Then, the statement of linearized elasticity is given by
	\begin{equation*}
		\begin{aligned}
			- \nabla \cdot \sigma(\uu) &= \ff \inn \Omega \\
							\uu &= \bg \on \partial \Omega,
		\end{aligned}
	\end{equation*}
	where $\ff$ is the external force applied to the structure, and $\bg$ is the
	initial displacement of the structure on its boundary. 
	
	\begin{figure}
		\centering
		\resizebox{0.40\textwidth}{!}{
		\input{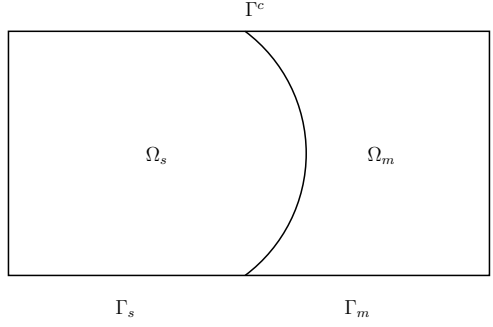}
		}
		\caption{A sketch of the continuous interface coupling domain}
	\end{figure}
	\subsection{The Continuous Interface Problem}
	Let us now assume that there exists two domains, $\Omega_i$, $i=s,m$,
	such that the intersection of their boundaries is a simply connected curve. Let
	$\Gamma^c := \partial \Omega_s \cap \partial \Omega_m$ be the \emph{interface}
	between $\Omega_i$, and $\Gamma_i := \partial \Omega_i \setminus \Gamma^c$
	be the complement of the interface on the boundary $\partial \Omega_i$.
	In addition, let $\n_i$ be the outward unit normal vector of $\partial \Omega_i$.
	Let us denote $\uu_i$ as the virtual displacement over $\Omega_i$ for a material characterized by 
	its Lam\'e parameters $\lambda_i$ and $\mu_i$. Assume that there is no slip
	at $\Gamma^c$; then, virtual displacements $\uu_i$ is governed by the system
		\begin{subequations} \label{interface problem}
		\begin{equation}
		\begin{aligned}
			-\nabla \cdot \sigma_i(\uu_i) &= \ff_i \inn \Omega_i \\
							\uu_i &= \bg_i \on \Gamma_i
		\end{aligned}
		\end{equation}
		\textrm{ subject to }\quad
		\begin{equation} \label{interface conditions}
		\begin{aligned}
			\uu_s - \uu_m &=0 \\
			\sigma_s(\uu_s)\n_m - \sigma_m(\uu_m)\n_m &= 0
		\end{aligned}
		\quad\textrm{on } \Gamma^c,
		\end{equation}
		\end{subequations}
	 where $\sigma_i(\uu_i) = \lambda_i (\nabla \cdot \uu_i) \mathbf I + 2 \mu_i \varepsilon(\uu_i) $,
	 $\ff_i$ is the external force applied to each structure, and $\bg_i$ is the initial displacement
	 of each structure on $\Gamma_i$. The interface conditions defined on $\Gamma^c$
	 in \eqref{interface conditions} describes the transmission of stresses between the two 
	 structures. If $\lambda_s = \lambda_m$ and $\mu_s = \mu_m$, then \eqref{interface problem}
	 may be referred to as a nonoverlapping \emph{domain decomposition} problem because it models
	 the same structural material over both subdomains.
	 
	Using the standard Sobolev space notation, let
	\begin{equation*}
		H^1_{\Gamma_i}(\Omega_i) = \left\{ v \in H^1(\Omega_i); v = 0 \on \Gamma_i \right\},
	\end{equation*}
	\begin{equation*}
		\VV_i = \left( H^1_{\Gamma_i}(\Omega_i) \right)^2,
	\end{equation*}
	and
	\begin{equation*}
		\VV_{i,0} = \left(H^1_0(\Omega_i)\right)^2
	\end{equation*}
	where, again, $i=s,m$.
	In addition, we shall define the interface trace space
	\begin{equation*}
		\WW = \left( H^{1/2}_{00}(\Gamma^c) \right)^2.
	\end{equation*}
	
	In this work, we consider \eqref{interface problem} in the weak form:
	 seek $(\uu_s, \uu_m)\in \mathbf V_s \times \mathbf V_m$ such that
	\begin{subequations} \label{galerkin form}
	\begin{equation} \label{dirichlet problem}
		\begin{aligned}
			\int_{\Omega_s} \sigma_s(\uu_s):\epsilon(\vv_s) dx 
				&= \int_{\Omega_s} \ff_s \cdot \vv_s dx
				&\quad \forall \vv_s \in \VV_{s,0} \\
			\int_{\Gamma^c}\left(\uu_s - \uu_m \right) \cdot \ww ds &= 0
				&\quad \forall \ww \in \WW
		\end{aligned}
	\end{equation}
	\begin{equation} \label{neumann problem}
		\int_{\Omega_m} \sigma_m(\uu_m):\epsilon(\vv_m)dx
		- \int_{\Gamma^c}\left(\sigma_s(\uu_s) \n_m\right) \cdot \vv_m ds
		= \int_{\Omega_m} \ff_m \cdot \vv_m dx
		\quad \forall \vv_m \in \VV_m.
	\end{equation}
	\end{subequations}
	 The set of equations \eqref{galerkin form} is the
	\emph{Dirichlet--Neumann} coupling of the linearized elasticity
	equations, since the matching of the Dirichlet conditions on $\Gamma^c$
	is weakly enforced in \eqref{dirichlet problem} and the matching of the
	Neumann conditions on $\Gamma^c$ are enforced in 
	\eqref{neumann problem}.
	
	\subsection{Finite Element Approximation}
	Let $\Omega_{i, h}$, $i=s,m$, be the triangulation of $\Omega_i$, with mesh size
	$h_i$ and $\partial \Omega_{i,h}$ be their boundaries. Assume for now that
	$\partial \Omega_{s,h} \cap \partial \Omega_{m,h}$ is a simply connected curve;
	i.e., the interface between the triangulations $\Omega_{s,h}$ and $\Omega_{m,h}$
	is \emph{spatially coincident}. Let us denote this discretized interface $\Gamma^c_h$
	and let $\Gamma_{i,h} := \partial \Omega_{i,h} \setminus \Gamma^c_h$. 
	Also, let us define
	\begin{equation*}
		V^h_i \subset H^1_{\Gamma_i}(\Omega_{i,h})
	\end{equation*}
	\begin{equation*}
		V^h_{i,0} = \left\{ v \in V^h_i; v = 0 \on \partial \Omega_{i,h} \right\},
	\end{equation*}
	and 
	\begin{equation*}
		W^h \subset H^{1/2}_{00}(\Gamma^c_h)
	\end{equation*}
	as standard piecewise linear approximation spaces. In addition, let
	$\VV^h_i := \left(V^h_i\right)^2$, $\VV^h_{i,0}:= \left(V^h_{i,0}\right)^2$ and 
	$\WW^h := \left( W^h \right)^2$ be the finite dimensional product spaces. 
	The statement of the finite element
	discretization of \eqref{galerkin form} becomes the following:
	seek $(\uu_s^h, \uu_m^h) \in \VV^h_s \times \VV^h_m$ such that
	\begin{subequations} \label{fem galerkin form}
	\begin{equation}
		\begin{aligned}
		\int_{\Omega_{s,h}} \sigma_s(\uu_s^h):\epsilon(\vv_s^h)dx
			&= \int_{\Omega_{s,h}}{\ff_s \cdot \vv_s} dx &\quad \forall \vv^h_s \in \VV^h_{s,0} \\
		\int_{\Gamma^c_h} \left(  \uu_s^h - \uu_m^h \right)\cdot \ww^h ds &= 0
			&\quad \forall \ww^h \in \WW^h
		\end{aligned}
	\end{equation}
	\begin{equation}
		\int_{\Omega_{m,h}} \sigma_m(\uu^h_m): \epsilon(\vv^h_m) 
		- \int_{\Gamma^c_h} \left( \sigma_s(\uu^h_s)\n_m \right) \cdot \vv^h_m ds
		= \int_{\Omega_{m,h}} \ff_m \cdot \vv_m dx
		\quad \forall \vv_m^h \in \VV^h_m,
	\end{equation}
	\end{subequations}	
	where $\uu^h_i$, $i=s, m$, denotes the discretized virtual displacement.
	
	\subsubsection{Spatially Noncoincident Interfaces}
	\begin{figure}
		\centering
		\resizebox{0.4\textwidth}{!}{
		\input{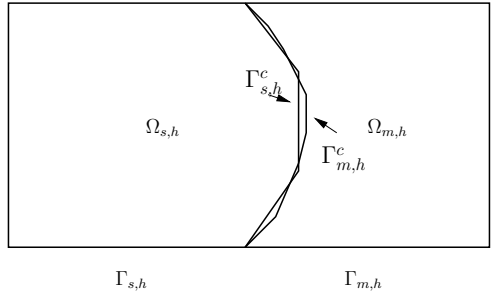}
		}
		\caption{An illustration of subdomains where their interfaces are spatially noncoincident.}
	\end{figure}
	We shall now do away with the assumption that the interfaces between $\Omega_{s,h}$
	and $\Omega_{m,h}$ are spatially coincident. Let $\Gamma^c_{i,h}$, $i=s, m$,
	be the discretization of $\Gamma^c$ that belongs to $\partial \Omega_{i,h}$ and
	$\Gamma_{i,h} := \partial \Omega_{i,h} \setminus \Gamma^c_{i,h}$. In addition, let
	$V_i^h$ and $V_{i,0}^h$ again be the the piecewise linear approximation spaces
	over $\Omega_{i,h}$ redefined with the change of definition of $\Gamma_{i,h}$.
	Also again, let $\VV^h_i = \left( V_i^h \right)^2$ and $\VV^h_{i,0} = \left( V_{i,0}^h \right)^2$.
	Now let us define 
	\begin{equation*}
		W^h_{i} \subset H^{1/2}_{00}(\Gamma^c_{i,h})
	\end{equation*}
	as the piecewise linear approximation space over $\Gamma^c_{h,i}$
	and $\WW^h_i:= \left( W^h_i\right)^2$ as the
	product space over the trace.
	
	The difficulty in coupling problems with noncoincident discrete interfaces presents
	itself in the interface conditions. Because $\uu_s \big|_{\Gamma^c_{s,h}}$ and
	$\uu_m \big|_{\Gamma^c_{m,h}}$ are defined over separate interfaces, it becomes
	a troublesome ordeal to define a suitable set of interface conditions. In this work,
	we deal with this issue by extending the values related to $\uu_s\big|_{\Gamma^c_{s,h}}$
	to $\Gamma^c_{m,h}$ by using a Taylor series expansion.

	\paragraph{Taylor Series Expansion Operators}
	Before defining our discrete interface conditions we must first define some preliminary material.
	 Let $\xx_s$ and $\xx_m$ be points in $\Gamma^c_{s,h}$
	and $\Gamma^c_{m,h}$ respectively, then defining 
	\begin{equation*}
	h := \max_{i = s,m} h_i,
	\end{equation*} we 
	assume that there exists mappings 
	\begin{equation*}
	\phi_{sm}: \xx_s \in \Gamma^c_{s,h}\rightarrow \xx_m \in \Gamma^c_{m,h} 
	\quad \textrm{and} \quad
	 \phi_{ms}: \xx_m \in \Gamma^c_{m,h}\rightarrow
	\xx_s \in \Gamma^c_{s,h}
	\end{equation*}
	such that  
	\begin{equation} \label{distance condition}
		\sup_{\xx_s\in \Gamma^c_{s,h}} d(\xx_s, \phi_{sm}(\xx_s))\leq C h
	\quad\textrm{and}\quad
		\sup_{\xx_m \in \Gamma^c_{m,h}} d(\xx_m, \phi_{ms}(\xx_m)) \leq C h,
	\end{equation} where $C$ is an arbitrary constant. To simplify notation,
	we set $\hat\xx_m := \phi_{sm}(\xx_s)$ and $\hat\xx_s := \phi_{ms}(\xx_m)$,
	where \textbf{the dependence of $\hat\xx_m$ on $\xx_s$ and $\hat\xx_s$ on $\xx_m$ 
	is implied.}
		
	We are now ready to define our Taylor expansion operators. Let 
	$\T: \VV^h_s \rightarrow \WW^h_s$ be defined as
	\begin{equation} \label{taylor displacement}
		\T \vv(\xx_s) = \vv(\xx_s) + \nabla \vv(\xx_s) \left(\hat\xx_m - \xx_s\right)
	\end{equation}
	and $\TT: (\VV_s^h)^2 \rightarrow (\VV_m^h)^2$ be defined as
	\begin{equation} \label{taylor jacobian}
		\TT\tensor{v}(\xx_m) = \tensor{v}(\hat\xx_s)+ 
								\left[ \partial_x \tensor{v}(\hat\xx_s)
									 \left( \xx_m - \hat\xx_s \right), 
								\ \partial_y \tensor{v}(\hat\xx_s)
								 \left( \xx_m - \hat\xx_s\right) \right],
	\end{equation}
	where $\tensor{v} \in \left(\VV^h_s\right)^2$ is a rank two tensor . 
	\emph{
	We emphasize that $\T(\cdot)$ maps functions defined over $\Gamma^c_{s,h}$ to
	functions defined over $\Gamma^c_{s,h}$, since $\hat \xx_m$ depends on $\xx_s$. On the contrary, 
	$\TT(\cdot)$ maps functions defined over $\Gamma^c_{s,h}$ to functions defined over
	$\Gamma^c_{m,h}$, because $\hat\xx_s$ depends on $\xx_m$.}
	
	\paragraph{A Noncoincident Interface Formulation}
	Central to our method's
	ability to achieve second order accuracy in the $L^2$--norm is the
	superconvergent gradient recovery operator. There are various methods
	in the literature that allow us to recover a superconvergent gradient; i.e.,
	see \cite{zienkiewicz1992superconvergent, zienkiewiczsuperconvergent, 		
		levine1985superconvergent}. In this work, we choose
	to use the Zhang--Naga gradient recovery operator \cite{zhang2005new, naga2004posteriori}. 
	We shall denote $G_h: V^h_s \rightarrow \VV^h_s$
	as the recovered gradient.

	Borrowing terminology from the mortar element method, we call $\Gamma^c_{s,h}$
	the \emph{slave} interface and $\Gamma^c_{m,h}$ the \emph{master} interface. It is
	on the master interface, $\Gamma^c_{m,h}$, that we wish to enforce some sort of continuity of the 
	virtual displacements and the normal stresses. 
	Let the extended strain tensor, $\overline \varepsilon: \VV^h_{s} \rightarrow \left(\VV^h_m\right)^2$,
 	be defined as
	\begin{equation*}
		\overline \varepsilon (\vv) := \frac12 \left\{ \TT \left(\GG \vv\right) + \left[\TT \left(\GG \vv\right)\right]^T \right\},
	\end{equation*}
	where
	\begin{equation*}
		\GG \left[\begin{array}{c} v_1 \\ v_2 \end{array} \right] = \left[\begin{array}{c}
					\left(G_h v_1\right)^T \\
					\left(G_h v_2\right)^T
				\end{array}\right]
	\end{equation*}
	is the superconvergent Jacobian constructed from the recovered gradient of the vector components
	of $\vv$.
	Using the Taylor expansion operators
	defined in \eqref{taylor displacement} and \eqref{taylor jacobian}, we want to 
	enforce
	\begin{subequations} \label{coupling conditions}
	\begin{equation} \label{dirichlet condition}
		\T \uu_s^h = \uu_m^h(\hat\xx_m) \on \Gamma^c_{s,h},
	\end{equation}
	\textrm{and} 
	\begin{equation} \label{neumann condition}
		\sigma_m(\uu_m^h)\n_m = \overline \sigma_s(\uu_s^h) \n_m  \on \Gamma^c_{m,h},
	\end{equation}
	\end{subequations}
	where $\osigma_s(\uu_s^h) = \lambda_s \textrm{ tr}\left( \overline \varepsilon(\uu_s^h)\right)
	+ 2\mu_s \overline \varepsilon(\uu_s^h)$ is the stress tensor constructed from the 
	extended strain tensor. Again, note that \eqref{dirichlet condition} is
	 an equation posed over $\Gamma^c_{s,h}$ despite $\uu^h_m$ being
	defined over $\Gamma^c_{m,h}$ due to the dependence of $\hat\xx_m$ on $\xx_s$. 
	Also note that \eqref{neumann condition} 
	is an equation posed over $\Gamma^c_{m,h}$. We provide a 1D interpretation 
	of these interface conditions in Fig. \ref{1D illustration} to illustrate the meaning 
	of these conditions. 
	
	\begin{figure}
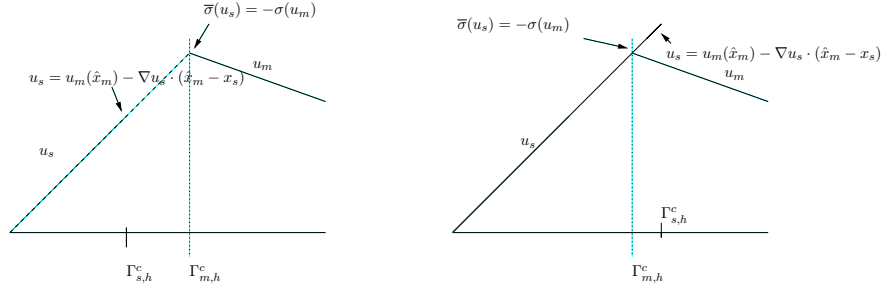

		\centering
		\resizebox{0.2\textheight}{!}{
		\input{./figures/1dgap_t}
		}
		\qquad\qquad
		\resizebox{0.2\textheight}{!}{
		\input{./figures/1doverlap_t}
		}
		\caption{Illustrations of the interface conditions 
			\eqref{coupling conditions}. Left: Gap at the interface. The dashed line represents
			the linear extrapolation of $u_s$. Right: Overlapping subdomains.}
		\label{1D illustration}
	\end{figure}
	
	\begin{remark}
	In this work, we make the fundamental assumption that
	the master interface, $\Gamma^c_{m,h}$, is an $\mathcal O(h^2_m)$ approximation of the continuous 
	interface $\Gamma^c$ so that we can achieve optimal convergence rates in the following coupling
	approach. Otherwise, the error in the interface approximation will dominate the error of the
	polynomial approximation.
	\end{remark}
	
	With the interface conditions
	\eqref{dirichlet condition} and \eqref{neumann condition} defined, we may 
	pose the noncoincident interface coupling problem as the following:
	seek $(\uu_s^h, \uu_m^h) \in \VV^h_s \times \VV^h_m$ such that
	\begin{subequations}  \label{discrete problem}
		\begin{equation}
			\begin{aligned} \label{not dirichlet}
			\int_{\Omega_{s,h}} \sigma_s(\uu_s^h):\varepsilon(\vv_s^h) dx
			&= \int_{\Omega_{s,h}} \ff_s \cdot \vv_s^h dx 
			&\quad \forall \vv_s^h \in \VV^h_{s,0} \\
			\int_{\Gamma^c_{s,m}} \left( \T \uu_s^h - \uu_m^h\left(\hat\xx_m \right) \right)
				\cdot \ww^h_s ds &= 0
			&\quad \forall \ww^h_s \in \WW^h_s
			\end{aligned}
		\end{equation}
		\begin{equation} \label{neumann}
			\int_{\Omega_{m,h}} \sigma_m(\uu_m^h):\varepsilon(\vv_m^h) dx
			- \int_{\Gamma^c_{m,h}} \left( \osigma(\uu_s^h)\n_m \right)\cdot \vv_m^h ds
			= \int_{\Omega_{m,h}} \ff_m \cdot \vv_m^h dx
			\quad \forall \vv_m^h \in \VV^h_m.
		\end{equation}
	\end{subequations}
	In the remainder of the paper, we shall use a more compact notation to describe
	\eqref{discrete problem}. Let the bilinear form $a^h_i: \VV_i^h \times \VV_i^h$, $i=s, m$, be
	defined as
	\begin{equation*}
		a^h_i(\vv, \ww) = \int_{\Omega_{i,h}} \sigma_i(\vv): \varepsilon(\ww) dx,
	\end{equation*}
	then \eqref{discrete problem} can be represented as: Seek
	$(\uu_s^h, \uu_m^h) \in \VV_s^h \times \VV^h_m$ such that
		\begin{equation} \label{compact discrete}
			\begin{aligned} 
			a^h_s(\uu_s^h, \vv_s^h) &= \left< \ff_s, \vv_s\right>_{\Omega_{s,h}}
			&\quad \forall \vv_s^h \in \VV^h_{s,0} \\
			\left<\T \uu_s^h - \uu_m^h\left(\hat\xx_m \right), 
				\ww^h_s\right>_{\Gamma^c_{s,h}} &= 0
			&\quad \forall \ww^h_s \in \WW^h_s \\
			a^h_m(\uu_m^h, \vv_m^h) 
			- \left< \osigma(\uu_s^h)\n_m, \vv_m^h \right>_{\Gamma^c_{m,h}}
			&= \left< \ff_m, \vv_m^h\right>_{\Omega_{m,h}}
			&\quad \forall \vv_m^h \in \VV^h_m,
			\end{aligned}
		\end{equation}
	where $\left<\cdot, \cdot \right>_D$ is the standard duality pairing with the integral
	taken over $D$, where $D$ can be $\Omega_{i,h}$ or $\Gamma^c_{i,h}$. The
	``Dirichlet'' subproblem in the above formulation is not a Dirichlet condition in the
	classical sense; it is perturbed by a oblique derivative term introduced by the
	Taylor series expansion. The interface condition used in the 
	Neumann subproblem remains a classical natural boundary condition on 
	the interface $\Gamma^c_{m,h}$. 

	\paragraph{Linear Consistency}
	As stated in the introduction, linear consistency is an important property for 
	a noncoincident coupling method to possess. In the following proposition, we show that
	\eqref{compact discrete} unconditionally possesses linear consistency.
	\begin{proposition}
		Assume that $(\lambda_s, \mu_s) = (\lambda_m, \mu_m)$ and $(\ff_s, \ff_m) = (0,0)$. 
		If $\qq_m$ and $\qq_s$ are linear vector functions with the same analytic 
		representation, then $(\qq_s^h, \qq_m^h)$ is a solution of \eqref{compact discrete}.
	\end{proposition}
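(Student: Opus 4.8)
The plan is to exploit the fact that a globally linear vector field lies in the piecewise linear finite element spaces, so that the interpolants $\qq_s^h = \qq_s$ and $\qq_m^h = \qq_m$ represent the linear data exactly. Writing $\qq_s(\xx) = A\xx + c$ for a constant matrix $A$ and constant vector $c$, with $\qq_m$ sharing this analytic representation, the Jacobian $\nabla \qq_s \equiv A$ is constant, and hence both $\varepsilon(\qq_s)$ and the stress $\sigma_s(\qq_s)$ are constant tensors. I would then verify the three equations of \eqref{compact discrete} one at a time, and the work reduces to checking that the two interface conditions \eqref{dirichlet condition} and \eqref{neumann condition} are satisfied pointwise by the linear fields.

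For the first (slave) equation, since $\sigma_s(\qq_s)$ is a constant symmetric tensor $\Sigma$ and $\vv_s^h \in \VV^h_{s,0}$ vanishes on all of $\partial\Omega_{s,h}$, the divergence theorem applied to the continuous piecewise linear field $\Sigma\vv_s^h$ gives $a^h_s(\qq_s,\vv_s^h) = \langle \Sigma\n_s, \vv_s^h\rangle_{\partial\Omega_{s,h}} = 0$, which matches the right-hand side because $\ff_s = 0$. For the interface matching condition, I would evaluate the displacement Taylor operator \eqref{taylor displacement} directly: since $\nabla\qq_s \equiv A$,
\[
\T\qq_s(\xx_s) = \qq_s(\xx_s) + A\left(\hat\xx_m - \xx_s\right) = A\hat\xx_m + c = \qq_s(\hat\xx_m) = \qq_m(\hat\xx_m),
\]
using that $\qq_s$ and $\qq_m$ have the same analytic form. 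Thus the integrand of the second equation of \eqref{compact discrete} vanishes pointwise on $\Gamma^c_{s,h}$, so that equation holds for every $\ww_s^h \in \WW^h_s$.

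The third (master) equation is where the recovered gradient enters, and the essential claim is that $\osigma(\qq_s)\n_m = \sigma_m(\qq_m)\n_m$ on $\Gamma^c_{m,h}$. To establish it I would invoke the polynomial-reproduction property of the Zhang--Naga operator, namely that $G_h$ returns the exact gradient of a linear function, so that $\GG\qq_s \equiv A$ is constant; because the tensor Taylor operator $\TT$ reproduces constants (the derivative terms in \eqref{taylor jacobian} annihilate a constant argument), $\TT(\GG\qq_s) \equiv A$, whence $\overline\varepsilon(\qq_s) = \tfrac12(A + A^T) = \varepsilon(\qq_s)$ and therefore $\osigma(\qq_s) = \sigma_s(\qq_s)$. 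Invoking $(\lambda_s,\mu_s) = (\lambda_m,\mu_m)$ together with $\qq_s = \qq_m$ then yields $\osigma(\qq_s)\n_m = \sigma_m(\qq_m)\n_m$. Integrating $a^h_m(\qq_m,\vv_m^h)$ by parts as before, the boundary term collapses to $\Gamma^c_{m,h}$ since $\vv_m^h$ vanishes on $\Gamma_{m,h}$, and it cancels exactly against the interface stress term, leaving $0 = \langle\ff_m,\vv_m^h\rangle_{\Omega_{m,h}} = 0$.

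The single nontrivial ingredient — and hence the main obstacle — is the consistency of the recovery operator $G_h$ on linear fields together with the exactness of the Taylor operators $\T$ and $\TT$ on linear and constant data, respectively; these are precisely the properties that force $\overline\varepsilon(\qq_s)$ to collapse to the exact strain and the extrapolated displacement to land on $\qq_m(\hat\xx_m)$. Everything else is the routine integration-by-parts bookkeeping above, relying only on the constancy of the stress and on the boundary behavior of the test spaces. I would therefore state the recovery-operator reproduction property explicitly, citing \cite{zhang2005new}, before assembling the three verifications.
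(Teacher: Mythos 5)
Your proof is correct and takes essentially the same route as the paper: both arguments verify the three equations of \eqref{compact discrete} by combining Green's identity (so that constant stresses of linear fields kill the interior terms), the exactness of $\T$ on linear data for the Dirichlet interface condition, and the polynomial-reproduction property of the Zhang--Naga operator together with the constancy-preservation of $\TT$ so that $\osigma(\qq_s^h)\n_m = \sigma_m(\qq_m^h)\n_m$ on $\Gamma^c_{m,h}$. The only cosmetic difference is that the paper organizes the integration by parts through a lifting-operator decomposition of the test space $\VV^h_m$ into interior and interface parts, whereas you apply the divergence theorem directly and observe that the boundary term collapses onto $\Gamma^c_{m,h}$ because $\vv_m^h$ vanishes on $\Gamma_{m,h}$.
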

	\begin{proof}
		Let $\mathcal R_h$ be any continuous lifting operator that maps
		$\WW^h_m$ into $\VV^h_m$, then we may decompose any function
		$\vv \in \VV^h_m$ into the following form
		\begin{equation} \label{H1 decomposition}
			\vv = \vv^0 + \mathcal R_h \vv \big|_{\Gamma^c_{m,h}},
		\end{equation}
		where $\vv^0 \in \VV^h_{m,0}$ and $\vv\big|_{\Gamma^c_{m,h}} \in \WW^h_m$. From this,
		we may represent \eqref{compact discrete} in the following form:
		Seek $\left( \uu_s^h, \uu_m^h \right) \in \VV^h_s \times \VV^h_m$
		such that
		\begin{subequations}
			\begin{equation} \label{1}
			\left< -\nabla \cdot \sigma(\qq_s^h), \vv_s^h \right>_{\Omega_{s,h}} = 0
			\quad \forall \vv_s^h \in \VV^h_{s,0}
			\end{equation}
			\begin{equation} \label{2}
			\left< -\nabla \cdot \sigma(\qq_m^h), \vv_m^{h,0}\right>_{\Omega_{m,h}} = 0
			\quad \forall \vv_m^h \in \VV^h_{m,0}
			\end{equation}
			\begin{equation} \label{3}
			\left<\T \qq_s^h - \uu_m^h\left(\hat\xx_m \right), 
				\ww^h_s\right>_{\Gamma^c_{s,h}} = 0
			\quad \forall \ww^h_s \in \WW^h_s
			\end{equation}
			\begin{equation} \label{4}
			\left< -\nabla \cdot \sigma(\qq_m^h), \mathcal R_h \ww^h_m \right>_{\Omega_{m,h}}
			+\left< \sigma(\qq_m^h)\n_m - \osigma(\qq_s^h)\n_m, \ww_m^h \right>_{\Gamma^c_{m,h}}
			= 0
			\quad \forall \ww_m^h \in \WW^h_m,
			\end{equation}
		\end{subequations}
		where we have used the decomposition \eqref{H1 decomposition}, and Green's
		identity on the bilinear forms $a_i(\cdot, \cdot)$. 
		The interface condition \eqref{3} is satified due to the 
		well--known linear polynomial preserving property of the Taylor expansion.	
		It is clear that \eqref{1} and \eqref{2} are satisfied, since the second derivatives
		of all linear polynomials vanish.
		It remains to show that \eqref{4} is satisfied. Seeing that
		$-\nabla\cdot \sigma(\qq^h_m) = 0$ from the preceding argument,
		\eqref{4} becomes
		\begin{equation} \label{4a}
			\left< \sigma(\qq^h_m)\n_m - \osigma(\qq_s^h)\n_m, \ww^h_m \right>_{\Gamma^c_{m,h}}
			= 0 \quad \forall \ww^h_m \in \WW^h_m,
		\end{equation}
		The polynomial preserving property of the Zhang--Naga gradient recovery operator (cite paper
		and lemma) implies
		that the second derivatives in $\osigma(\qq^h_s)$ vanish; thus,
		\eqref{4a} must be satisfied since $\qq_s^h$ and $\qq_m^h$ are linear
		vector functions with the same analytic representation. 
	\end{proof}
	
	\begin{remark}[Second Order Accuracy]
		The second order accuracy of \eqref{compact discrete} is attributed to the
		superconvergent stress tensor constructed using the Zhang--Naga gradient
		recovery operator in conjunction with the use of the second order Taylor series expansion
		to approximate the continuous interface condition on the slave and master interfaces.
	\end{remark}
	
	\section{An Iterative Solution Method} \label{iterative solution method}
	We shall now introduce a modified Dirichlet--Neumann iterative method 
	to solve \eqref{compact discrete}.
	Let $\omega \in (0,1]$ be a relaxation parameter,
	then given an initial guess $\bg_0$, for $k=1,\ldots, \infty$,
	\begin{subequations} \label{iterative method}
		\begin{equation}
		\begin{aligned} 
		a^h_s(\uu_{s,k}^h, \vv_s^h) &= \left< \ff_s, \vv_s\right>_{\Omega_{s,h}}
		&\quad \forall \vv_s^h \in \VV^h_{s,0} \\
		\left<\uu_{s,k}^h, \ww^h_s\right>_{\Gamma^c_{s,h}} &= \left< \bg_k, \ww^h_s\right>_{\Gamma^c_{s,h}}
		&\quad \forall \ww^h_s \in \WW^h_s \\
		a^h_m(\uu_{m,k}^h, \vv_m^h) 
		- \left< \osigma(\uu_{s,k}^h)\n_m, \vv_m^h \right>_{\Gamma^c_{m,h}}
		&= \left< \ff_m, \vv_m^h\right>_{\Omega_{m,h}}
		&\quad \forall \vv_m^h \in \VV^h_m
		\end{aligned}
		\end{equation}
		\begin{equation} \label{update condition}
		\bg_{k+1} = \omega \left( \uu_{m,k}^h(\hat\xx_m)
							- \nabla \uu_{s,k}^h (\hat\xx_m - \xx_s) \right)
				+(1- \omega) \bg_k.
		\end{equation}
	\end{subequations}
	For now, we shall assume that \eqref{iterative method} is convergent. We now show
	that, under this assumption, $\lim_{k\rightarrow\infty} (\uu_{s,k}^h, \uu_{m,k}^h) 
	= (\uu^h_s, \uu^h_m)$ is the solution of \eqref{compact discrete}.
	
	For simplicity, let us denote $(\uu_s^h, \uu^h_m)\in \VV^h _s \times \VV^h_m$ as the converged
	solution of the iterative method \eqref{iterative method}. We see that in the converged limit,
	the update condition \eqref{update condition} may be represented as
	\begin{equation*} \label{update limit}
		\lim_{k\rightarrow\infty} \bg_k = \uu^h_{m}(\hat\xx_m) - 
								\nabla \uu_s^h\left( \hat\xx_m - \xx_s \right).
	\end{equation*}
	Taking $k\rightarrow \infty$ and then inserting this limit into the second equation in 
	\eqref{iterative method} yields the interface condition
	\begin{equation*}
		\left< \uu_s^h + \nabla \uu^h_s (\hat\xx_m- \xx_s) -
			\uu_m^h(\hat\xx_m), \ww_s^h  \right>_{\Gamma^c_{s,h}} = 0,
	\end{equation*}
	which is exactly the interface condition on $\Gamma^c_{s,h}$ found in
	the monolithic formulation \eqref{compact discrete}. Hence, the limit
	of this iterative solution method solves the monolithic problem under the 
	assumption that \eqref{iterative method} is convergent. 
	
	We remark that the convergence of the \eqref{iterative method} is dependent on the 
	Lam\'e parameters over the slave and master domains along with the choice of the
	relaxation parameter $\omega$. In particular, if $\mu_s$ is significantly larger than $\mu_m$,
	the iterative method becomes nonconvergent regardless of the choice of $\omega$.
	\begin{remark}
		The convergence of the iterative solution method to the monolithic
		formulation implies that there exists a nonsingular solution branch
		of \eqref{compact discrete}.
	\end{remark}
	
	\subsection{Implementation}
	In this subsection, we discuss some details pertaining to the computational
	implementation of \eqref{iterative method}. The key topics to be considered
	here are the mappings between the slave and master interfaces, treatment
	of the oblique derivative boundary condition, and the construction of the
	superconvergent gradient at the interface.
	
	\paragraph{Defining $\phi_{sm}$ and $\phi_{ms}$}
	Recall that $\phi_{sm}$ and $\phi_{ms}$ are mappings that maps $\xx_s \in \Gamma^c_{s,h}$
	to $\xx_m \in \Gamma^c_{m,h}$. In the computational setting, $\phi_{sm}$ and $\phi_{ms}$
	need only be defined over a finite number of points on each interface. One may choose to define
	these mappings for the nodes at the interface if it is desired to enforce interpolated boundary 
	conditions; or alternatively, one may choose to define these mappings on the quadrature points of the 
	interface if weak enforcement of the interface condition is desired. In this work, we choose to 
	use the nearest--neighboring node approach to define $\phi_{sm}$ and $\phi_{ms}$
	on the nodes of their respective ranges, i.e., $\Gamma^c_{m,h}$ and $\Gamma^c_{s,h}$
	respectively. The definition of this approach is defined in the
	following paragraph.
	
	Let $\zz_s$ and $\zz_m$ be arbitrary nodes of $\Gamma^h_{s,h}$ and $\Gamma^h_{m,h}$
	respectively. Then for every node $\zz_s\in \Gamma^h_{s,h}$, its nearest--neighboring
	node is defined as
	\begin{equation}
		\phi_{sm}(\zz_s) = \underset{\zz_m \in \Gamma^c_{m,h}}{\arg\min}\ d(\zz_s, \zz_m).
	\end{equation}
	Similarly, for every node $\zz_m\in \Gamma^h_{m,h}$, its nearest--neighboring node is
	defined as
	\begin{equation}
		\phi_{ms}(\zz_m) = \underset{\zz_s \in \Gamma^c_{s,h}}{\arg\min}\ d(\zz_m, \zz_s).
	\end{equation}
	If $\Gamma^c_{i,h}$, $i=s, m$, are $\mathcal O(h_i^2)$ approximations of the continuous
	interface $\Gamma^c$, then it is clear
	that the distance condition \eqref{distance condition} is satisfied using the nearest--neighboring 
	node approach. Of course, this approach 
	may yield greater than one nearest node. In such a case, it is sufficient to just chose a single node
	out of all possible node candidates that satisfies the minimum distance criterion.
	\begin{remark}
		In the literature, there are methods that allows one to get a closer neighboring point
		on the neighboring interface. Some methods are detailed in \cite{de2008comparison}.
	\end{remark}

	\paragraph{The Dirichlet Subproblem}	
	In \eqref{iterative method}, the Dirichlet interface condition on $\Gamma^c_{h,s}$ is enforced weakly. 
	While this provides desirable mathematical properties (i.e., the Dirichlet data may be less regular), 
	in practice, one may choose to instead enforce this condition in the strong form,
	i.e.,
	\begin{equation}
		\uu^h_{s,k} \big|_{\Gamma^c_{s,h}} = \bg_k.
	\end{equation}
	In our numerical experiments, this is the condition that we enforce. Of course, we must now
	introduce some sort of operator that maps the 
	$\uu^h_{m,k} - \nabla \uu^h_{s,k}(\hat\xx_m-\xx_s)$
	term in the update condition of \eqref{iterative method} into $\WW^h_s$, which is again 
	the piecewise linear approximation space over $\Gamma^c_{s,h}$. In this work, we choose
	to interpolate the values of this term over the nodes of the $\Gamma^c_{s.h}$. Subsequently,
	the update condition becomes
	\begin{equation}
		\bg_{k+1} = \omega \mathcal I_{s,h} \left( \uu_{m,k}^h(\hat\xx_m)
							- \nabla \uu_{s,k}^h \cdot (\hat\xx_m - \xx_s) \right)
				+(1- \omega) \bg_k \quad \forall \ww^h_s \in \WW^h_s,
	\end{equation}
	where $\mathcal I_{s,h}$ is the piecewise linear interpolation operator over $\Gamma^c_{s,h}$.
	This interpolation is implied in the standard
	method of enforcing Dirichlet conditions in the finite element method.
	
	\paragraph{The Neumann Subproblem}
	At first glance, it may appear that we must define the superconvergent	
	Jacobian, $\GG\uu^h_{s,k}$, everywhere in $\Omega_{s,h}$; however, in this application, 
	$\GG\uu^h_{s,k}$ is used only in the construction of $\osigma(\uu^h_{s,k})$. 
	Therefore, we only require
	that $\GG\uu^h_{s,k}$  is defined only over the triangles that contain the nodes
	in $\Gamma^c_{s,h}$. We shall call this region $\overline {\mathcal T}_{s,h}$. 
	An illustration of $\overline{\mathcal T}_{s,h}$ is presented in Fig. \ref{recovery layer}.
	
	After constructing $\GG\uu^h_{s,k}$ over $\overline{\mathcal T}_{s,h}$, we 
	we may construct $\overline \varepsilon(\uu^h_{s,k})$, and subsequently $\osigma(\uu^h_{s,k})$.
	While there is no issue approximating 
	$\left< \osigma(\uu^h_{s,k})\n_m, \vv_m^h \right>_{\Gamma^c_{m,h}}$ by using a quadrature rule,
	in this work, we choose to interpolate the values of $\osigma(\uu^h_{s,k})$ at the 
	nodes of $\Gamma^c_{m,h}$ and then use a quadrature rule to compute the 
	natural boundary condition term of the variational form.
	
	\begin{figure}[htbp]
		\centering
		\resizebox{0.1\textheight}{!}{
		\input{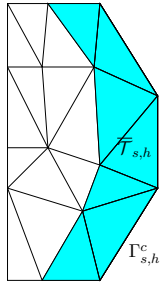}
		}
		\caption{An illustration of the region $\overline{\mathcal T}_{s,h}$.}
		\label{recovery layer}
	\end{figure}
	
	\paragraph{A Complete Description of the Algorithm}
	In the preceding paragraphs, we have given the mathematical statement of the
	modified Dirichlet--Neumann iterative approach and provided some key details
	in the implementation of this algorithm. We shall now
	provide a sketch of what a code implementation of \eqref{iterative method}
	would look like.
	\begin{algorithm}[Modified Dirichlet--Neumann]
	Define $M = \dim(\VV^h_{s,0})$ and $M' = \dim(\VV^h_m)$, and $\psi_j, j=1, \ldots, M$ and
	$\Psi_{j'}, j'=1,\ldots, M'$ as the bases of $\VV^h_{s,0}$ and $\VV^h_m$ respectively.
	Given triangulations $\Omega_{s,h}$ and $\Omega_{m,h}$, an error tolerance 
	$\delta\in \mathbb R^+$,
	and a relaxation parameter $\omega \in  (0,1]$,
	\begin{enumerate}
		\item Determine $\overline{\mathcal T}_{s,h}$.
		\item Obtain the element patch information for the nodes in 
			$\overline{\mathcal T}_{s,h}$ for the 
			superconvergent gradient algorithm.
		\item Get the nearest--neighboring nodes of the interfaces 
				$\Gamma^c_{s,h}$ and $\Gamma^c_{m,h}$.
		\item Set initial guess $\bg_0$.
		\item While $\| \bg_{k+1} - \bg_k \|_{L^2(\Gamma^c_{s,h})} > \delta$:
		\begin{enumerate}
			\item Solve
			\begin{equation*}
				a^h_s(\uu_{s,k}^h, \psi_j) = \left< \ff_s, \psi_j\right>_{\Omega_{s,h}}
				\quad \textrm{for } j= 1,\ldots, M
			\end{equation*}
			with interface condition
			\begin{equation*}
				\uu_{s,k}^h \big|_{\Gamma^c_{s,h}}= \bg_k.
			\end{equation*}
			\item Compute $\tensor{J} = \GG \uu^h_{s,k}$ over $\overline{\mathcal T}_{s,h}$.
			\item On $\Gamma^c_{m,h}$:
			\begin{enumerate}
			\item Compute $\TT \tensor{J}$ from formula \eqref{taylor jacobian}.
			\item Compute $\overline\varepsilon(\uu^h_{s,k}) = \frac12 \left[
						\TT \tensor{J} + \left(\TT \tensor{J}\right)^T \right]$.
			\item Compute $\osigma(\uu^h_{s,k}) = \lambda_s \textrm{ tr }\overline\varepsilon
							(\uu^h_{s,k}) + 2\mu_s \overline\varepsilon(\uu^h_{s,k})$.
			\end{enumerate}
			\item Solve
			\begin{equation*}
					a^h_m(\uu_{m,k}^h, \Psi_{j'}) 
		- \left<\mathcal I_{m,h} \osigma(\uu_{s,k}^h)\n_m, \Psi_{j'} \right>_{\Gamma^c_{m,h}}
		= \left< \ff_m, \Psi_{j'}\right>_{\Omega_{m,h}}
		\quad \textrm{for } j' = 1,\ldots, M'.
			\end{equation*}
			\item Compute
				\begin{equation*}
						\bg_{k+1} = \omega \mathcal I_{s,h}\left( \uu_{m,k}^h(\hat\xx_m)
							- \nabla \uu_{s,k}^h \cdot (\hat\xx_m- \xx_s) \right)
				+(1- \omega) \bg_k.
				\end{equation*}
		\end{enumerate}
	\end{enumerate}
	\end{algorithm}

	\section{Numerical Results} \label{Numerical Results}
	In this section, we will demonstrate that the iterative coupling procedure 
	presented in the previous section allows us to achieve the error bounds
	expected from the piecewise linear finite element method.
	
	For simplicity, let us define $\uu = \left( \uu_s, \uu_m \right)$
	as the solution of the continuous interface coupling problem \eqref{galerkin form},
	and 	$\uu^h = \left( \uu_s^h, \uu_m^h \right)$ as the converged solution of the
	iterative formulation \eqref{iterative method}. In addition, let 
	$\Omega_h = \Omega_{s,h} \cup \Omega_{m,h}$.
	We will report the error in the broken norms defined as
	\begin{equation*}
		\| \vv \|_{L^2(\Omega_h)} = 
			\left(\| \vv_s \|_{L^2(\Omega_{s,h})}^2 + \| \vv_m\|_{L^2(\Omega_{m,h})}^2\right)^\frac12
	\end{equation*}
	and
	\begin{equation*}
		\| \vv \|_{H^1(\Omega_h)} = 
			\left(\| \vv_s \|_{H^1(\Omega_{s,h})}^2 + \| \vv_m \|_{H^1(\Omega_{m,h})}^2\right)^\frac12
	\end{equation*}
	for all functions $\vv = (\vv_s, \vv_m)$ in $H^1(\Omega_{s,h}) \times H^1(\Omega_{m,h})$.
	In addition, we will use the notation $\#(\Gamma^c_{h,i})$, $i=s, m$ to denote the number
	of elements on $\Gamma^c_{h,i}$. Also, 
	again, we define $h:=\max\{h_s, h_m\}$ as the maximum element diameter in the 
	of both triangulations $\Omega_{s,h}$ and $\Omega_{m,h}$.
 	Finally, let $K$ be the number of iterations required to reach convergence. 	
	
	The convergence results in the following numerical experiments will be structured
	in the following manner. First, we shall discuss the domain decomposition problem,
	i.e., where the Lam\'e parameters of the subproblems are equal. In this test problem,
	we will first demonstrate that the linear patch test is passed for subdomains with 
	noncoincident curved interfaces, and then we will demonstrate that we get the expected
	finite element convergence rates for piecewise linear approximation spaces. Second,
	we will present the convergence rates for the general interface coupling problem
	where the Lam\'e parameters of the solids are not necessarily equal. The error tolerance
	$\delta = 1\sci{-6}$ was used in all of the following numerical experiments. 
	For convenience, we call the ratio between the number of elements on the slave
	interface and the number of elements on the master interface the ``slave : master ratio''.
	
	\begin{figure}
		\centering
		\includegraphics[width=0.45\textwidth]{./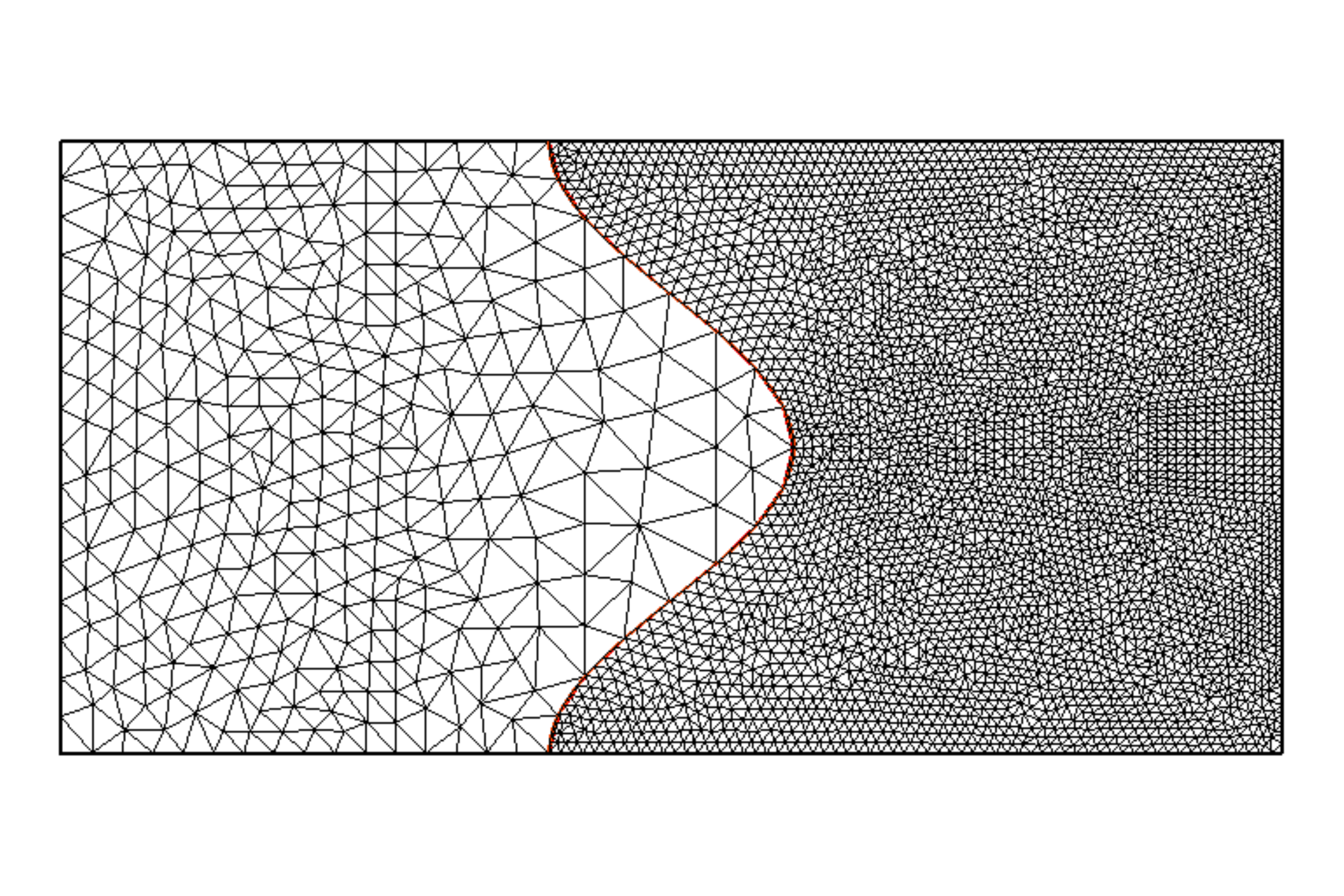}
		\caption{An illustration of a spatially noncoincident mesh used in 
		the numerical examples.}
	\end{figure}
	
	\subsection{The Domain Decomposition Problem}
	For the sake of convenience, we set the Lam\'e parameters $(\lambda, \mu) = (1, 1)$
	on both subdomains. The coupled problem domain in this test problem is a 
	rectangle that occupies the region $\Omega_h = \left[-1, 1\right] \times [0, 1]$ with
	discrete interfaces generated from the analytic representation $-\frac15 \cos(2\pi y)$,
	where $y\in[0, 1]$ at $x = 0$. $\Omega_{s,h}$ is taken to be the triangulation
	of the subdomain on the left side of the continuous interface, and $\Omega_{m,h}$ is
	taken to be the triangulation of the subdomain on the right side of the continuous interface.
	
	\subsubsection{The Linear Patch Test}
	In this numerical experiment, we determine whether \eqref{iterative method} can
	recover the linear vector function $\uu = \left[x+y, x+y\right]^T$ as an exact solution
	for the homogeneous problem. The results for some representative subdomain configurations
	are presented in Table \ref{patch test table}. It is demonstrated that 
	\eqref{iterative method} for both spatially coincident and noncoincident interfaces. 
	The relaxation parameter in the algorithm was set to $\omega = 0.7$.
	
	\begin{table}[htbp]
        \begin{center}
        \begin{tabular}{|c|c|c|c|}
        		\hline
        		$\#(\Gamma^c_{h,s})$ & $\#(\Gamma^c_{h,m})$ & $\|\uu - \uu^h\|_{L^2(\Omega_h)}$ & $K$\\
		\hline
		$25$ & $25$ & $2.4960\sci{-7}$ & $28$\\
		$100$ & $100$ & $2.2622\sci{-7}$ & $26$ \\
		\hline
		$25$ & $50$ & $3.0596\sci{-7}$ & $28$ \\
		$25$ & $100$ & $3.1428\sci{-7}$ & $29$ \\
		\hline
		$50$ & $25$ & $2.7181\sci{-7}$ & $27$ \\
		$100$ & $25$ & $4.8236\sci{-7}$ & $36$\\
		\hline
        \end{tabular}
        \caption{Linear Patch Test. The iterative method \eqref{iterative method} recovers the linear patch
        test.}
        \end{center}
               \label{patch test table}
        \end{table}
        
        \begin{figure}
        		\centering
        		\includegraphics[width=0.45\textwidth]{./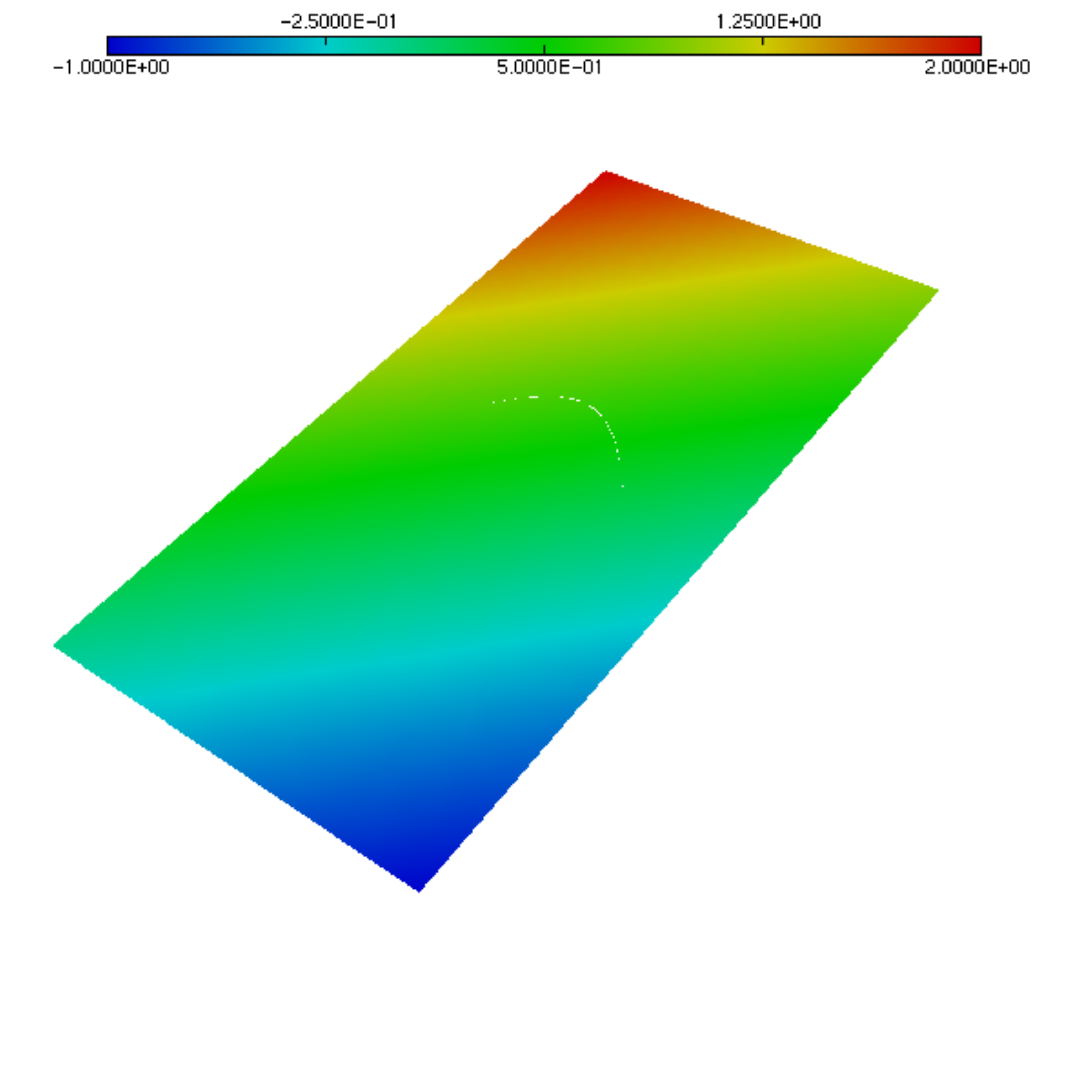}
		\includegraphics[width=0.45\textwidth]{./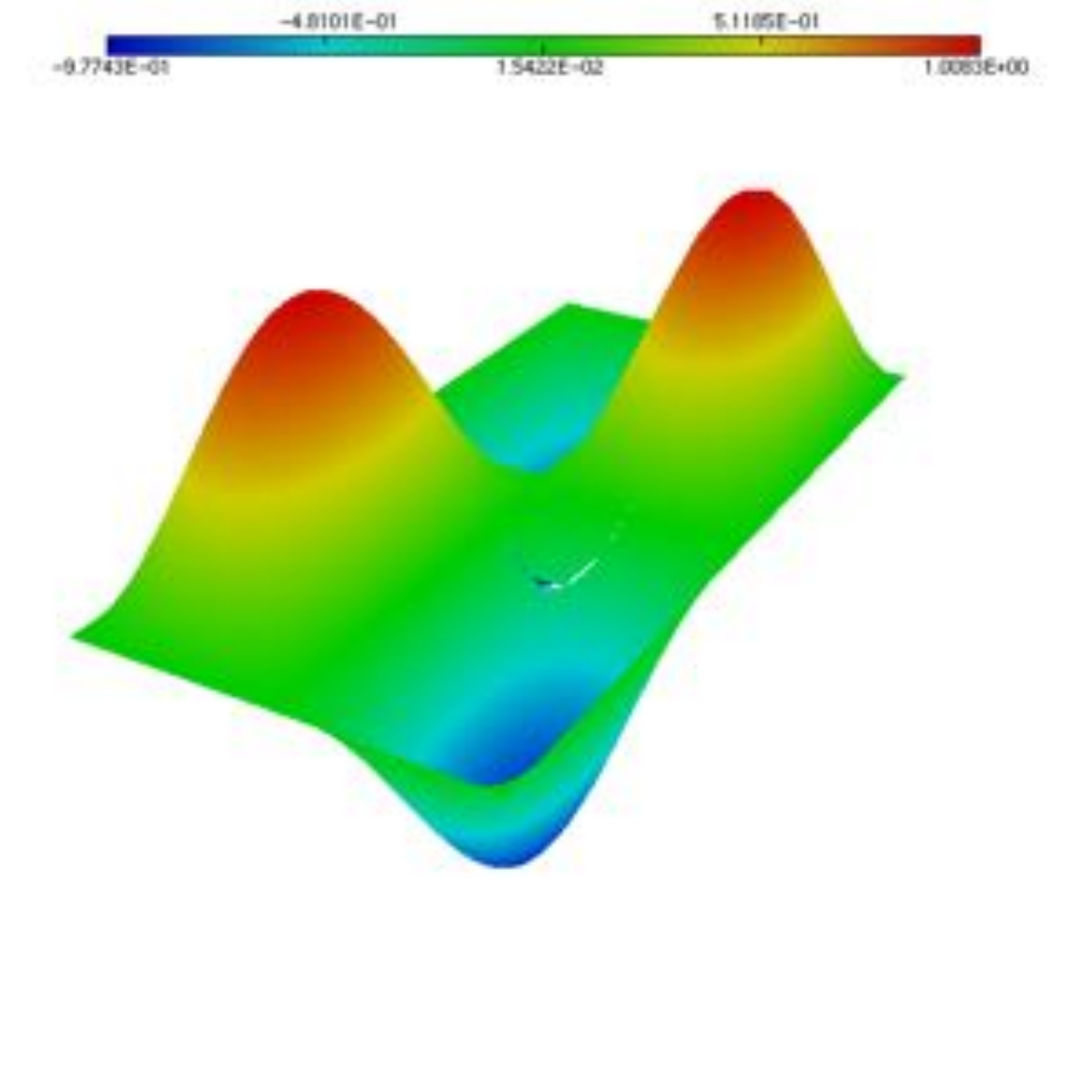}
		\caption{Left: The x--displacement solution of the linear patch test.
		Right: $ \sin(\pi x)\sin(2\pi y)$ as computed from the x--displacement
		of the solution of the iterative method.}
        \end{figure}
        
	\subsubsection{Convergence to a Manufactured Solution}
	Here, we determine the rate of convergence of the solution of the discrete
	problem to the manufactured solution $\uu = \left[ \sin(\pi x)\sin(2\pi y), \sin(\pi x)\sin(2\pi y) \right]^T$.
	First, we present the convergence results for the case where $\Gamma^c_{h,i}$, $i=s,m$,
	coincide, and then we shall present the convergence history for the
	spatially noncoincident cases, where the ratio between the number of elements
	on the master interface and the slave interface vary. 
	We have demonstrated in the Tables 2--4 that the iterative method presented in the
	previous section converges optimally for piecewise linear Lagrange elements. An illustration
	of the convergence rate can be found in Fig. \ref{DD Convergence}.
	
	\begin{figure}[htbp]
        \begin{center}
		\includegraphics{./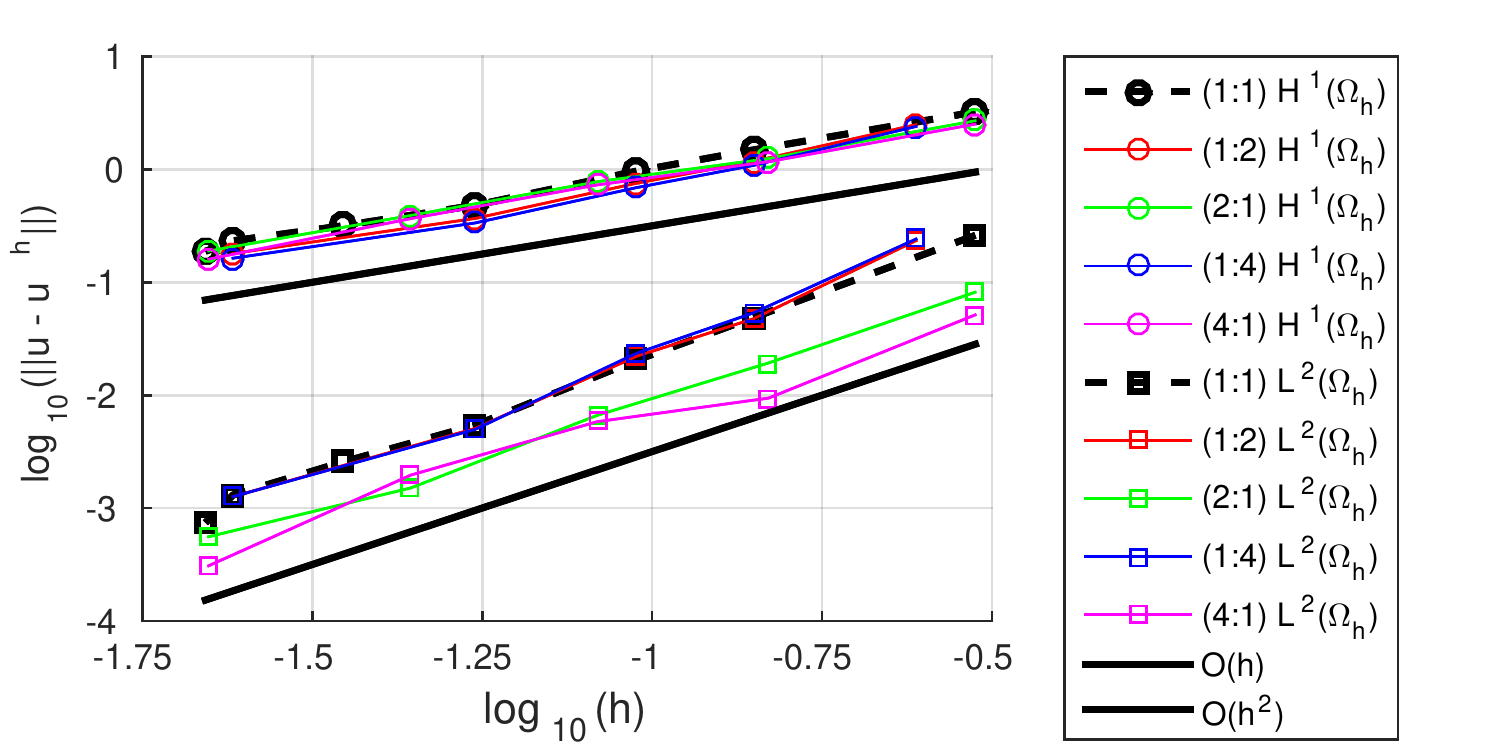}        
        \caption{ $H^1(\Omega_h)$ and $L^2(\Omega_h)$ convergence history illustration for
        		     the domain decomposition problem.}
        \label{DD Convergence}
        \end{center}
        \end{figure}

	\begin{table}[htbp]
        \begin{center}
        \begin{tabular}{|c|c|c|c|c|c|}
        		\hline
        		$\#(\Gamma^c_{h,s})$ & $\#(\Gamma^c_{h,m})$ & $h$ & $\|\uu - \uu^h\|_{L^2(\Omega_h)}$ &
		$\| \uu - \uu^h \|_{H^1(\Omega_h)}$ & $K$\\
		\hline
		$8$ & $8$ & $0.244511$ & $0.260548$ & $3.22675$ & $26$\\
		$16$ & $16$ & $0.140988$ & $0.0487357$ & $1.48733$ & $26$ \\
		$25$ & $25$ & $0.0948402$ & $0.0203974$ & $0.964635$ & $22$ \\
		$50$ & $50$ & $0.0548697$ & $0.00540889$ & $0.480094$ & $17$\\
		$75$ & $75$ & $0.0350554$ & $0.00255794$ & $0.316482$ & $17$ \\
		$100$ & $100$  & $0.0241639$ & $0.00130554$ & $0.233027$ & $17$ \\
		$125$ & $125$ & $0.0220104$ & $0.000741496$ & $0.188069$ & $18$\\
		\hline
		\multicolumn{3}{|r|}{} 
		& $L^2(\Omega_h)$ Rate &
		\multicolumn{2}{|c|}{2.17} \\
		\hline
		\multicolumn{3}{|c|}{}
		&$H^1(\Omega_h)$ Rate & 
		\multicolumn{2}{|c|}{1.01} \\
		\hline
        \end{tabular}
        \caption{Domain Decomposition Problem: Spatially Coincident Interface. }
        \end{center}
        \label{DD table 1}
        \end{table}

	\begin{table}[htbp]
        \begin{center}
        \begin{tabular}{|c|c|c|c|c|c|}
        		\hline
        		$\#(\Gamma^c_{h,s})$ & $\#(\Gamma^c_{h,m})$ & $h$ & $\|\uu - \uu^h\|_{L^2(\Omega_h)}$ &
		$\| \uu - \uu^h \|_{H^1(\Omega_h)}$ & $K$\\
		\hline
		$8$ & $16$ & $0.244511$ & $0.233747$ & $2.48788$ & $26$\\
		$16$ & $32$ & $0.140988$ & $0.0475133$ & $1.17863$ & $25$ \\
		$25$ & $50$ & $0.0948402$ & $0.0220765$ & $0.753572$ & $22$ \\
		$50$ & $100$ & $0.0548697$ & $0.00507153$ & $0.367443$ & $18$\\
		$100$ & $200$ & $0.0241639$ & $0.00126299$ & $0.179719$ & $18$\\
		\hline
		\multicolumn{3}{|r|}{} 
		& $L^2(\Omega_h)$ Rate &
		\multicolumn{2}{|c|}{2.25} \\
		\hline
		\multicolumn{3}{|c|}{}
		&$H^1(\Omega_h)$ Rate & 
		\multicolumn{2}{|c|}{1.14} \\
		\hline
        \end{tabular}
        \begin{tabular}{|c|c|c|c|c|c|}
        		\hline
        		$\#(\Gamma^c_{h,s})$ & $\#(\Gamma^c_{h,m})$ & $h$ & $\|\uu - \uu^h\|_{L^2(\Omega_h)}$ &
		$\| \uu - \uu^h \|_{H^1(\Omega_h)}$ & $K$\\
		\hline
		$16$ & $8$ & $0.29873$ & $0.081574$ & $2.69409$ & $33$ \\
		$32$ & $16$ & $0.148393$ & $0.0192996$ & $1.24801$ & $26$ \\
		$50$ & $25$ & $0.0830141$ & $0.00662555$ & $0.776856$ & $23$ \\
		$100$ & $50$ & $0.0441468$ & $0.00150745$ & $0.39077$ & $24$ \\
		$200$ & $100$ & $0.0222507$ & $0.000557604$ & $0.189376$ & $22$ \\
		\hline
		\multicolumn{3}{|r|}{} 
		& $L^2(\Omega_h)$ Rate &
		\multicolumn{2}{|c|}{1.95} \\
		\hline
		\multicolumn{3}{|c|}{}
		&$H^1(\Omega_h)$ Rate & 
		\multicolumn{2}{|c|}{1.01} \\
		\hline
        \end{tabular}
        \caption{Domain Decomposition Problem. Top: $(1:2)$ slave : master ratio. Bottom:
        			$(2:1)$ slave : master ratio.}
        \end{center}
        \label{DD table 2}
        \end{table}

	\begin{table}[htbp]
        \begin{center}
        \begin{tabular}{|c|c|c|c|c|c|}
        		\hline
        		$\#(\Gamma^c_{h,s})$ & $\#(\Gamma^c_{h,m})$ & $h$ & $\|\uu - \uu^h\|_{L^2(\Omega_h)}$ &
		$\| \uu - \uu^h \|_{H^1(\Omega_h)}$ & $K$\\
		\hline
		$8$ & $32$ & $0.244511$ & $0.241335$ & $2.39941$ & $25$\\
		$16$ & $64$ & $0.140988$ & $0.0535691$ & $1.08416$ & $24$ \\
		$25$ & $100$ & $0.0948402$ & $0.0235007$ & $0.689027$ & $23$ \\
		$50$ & $200$ & $0.0548697$ & $0.00497135$ & $0.334919$ & $19$ \\
		$100$ & $400$ & $0.0241639$ & $0.00126203$ & $0.163256$ & $18$ \\
		\hline
		\multicolumn{3}{|r|}{} 
		& $L^2(\Omega_h)$ Rate &
		\multicolumn{2}{|c|}{2.29} \\
		\hline
		\multicolumn{3}{|c|}{}
		&$H^1(\Omega_h)$ Rate & 
		\multicolumn{2}{|c|}{1.16} \\
		\hline
        \end{tabular}
        \begin{tabular}{|c|c|c|c|c|c|}
        		\hline
        		$\#(\Gamma^c_{h,s})$ & $\#(\Gamma^c_{h,m})$ & $h$ & $\|\uu - \uu^h\|_{L^2(\Omega_h)}$ &
		$\| \uu - \uu^h \|_{H^1(\Omega_h)}$ & $K$\\
		\hline
		$32$ & $8$ & $0.29873$ & $0.0513822$ & $2.51413$ & $36$ \\
		$64$ & $16$ & $0.148393$ & $0.0094199$ & $1.1693$ & $31$ \\
		$100$ & $25$ & $0.0830141$ & $0.00582067$ & $0.728018$ & $36$\\
		$200$ & $50$ & $0.0441468$ & $0.0019599$ & $0.366355$ & $52$ \\
		$400$ & $100$ & $0.0222507$ & $0.00030831$ &  $0.160208$ & $32$ \\
		\hline
		\multicolumn{3}{|r|}{} 
		& $L^2(\Omega_h)$ Rate &
		\multicolumn{2}{|c|}{1.85} \\
		\hline
		\multicolumn{3}{|c|}{}
		&$H^1(\Omega_h)$ Rate & 
		\multicolumn{2}{|c|}{0.98} \\
		\hline
        \end{tabular}
        \caption{Domain Decomposition Problem. Top: $(1:4)$ slave : master ratio. 
        			Bottom: $(4:1)$ slave : master ratio.}
        \end{center}
        \label{DD table 3}
        \end{table}

	\subsection{Discontinuous Lam\'e Parameters}
	In this test problem, we consider the coupling problem \eqref{compact discrete}
	with Lam\'e parameters $(\lambda_s, \mu_s)=(2,1)$ and $(\lambda_m, \mu_m) = (2,1)$.
	The coupled problem domain in this test case is the same as the domain considered in the
	previous subsection, i.e., a rectangle that occupies the region $\Omega_h = [-1, 1]\times [0,1]$
	with discrete interfaces generated from the analytic representation $-\frac15 \cos(2\pi y)$,
	where $y\in[0,1]$ at $x=0$. $\Omega_{s,h}$ is again taken to be the triangulation of the
	subdomain at the left side of the continuous interface, and $\Omega_m,h$ is taken to 
	be the triangulation of the subdomain on the right side of the continuous interface.
	We have set $\ff_s = \left[10, 10\right]^T$ and $\ff_m = \left[-10, -10 \right]^T$ and 
	$\uu_s = 0$ on $\Gamma_{s,h}$ and $\uu_m = 0 $ on $\Gamma_{m,h}$. 
	The relaxation parameter used in this numerical experiment was chosen to be $\omega = 0.7$.
	We test the rate of convergence of our iterative method against an 
	``overkilled'' solution, i.e., a discrete solution computed on a mesh size of $5.499\sci{-3}$.
	This solution was computed monolithically using standard piecewise linear Lagrange elements.

	From the numerical results gathered from this experiment, it is clear that the 
	iterative method presented in the previous section is a general method that
	is second order accurate in $L^2(\Omega_h)$ and first order accurate in 
	$H^1(\Omega_h)$.
	
	\begin{figure}[htbp]
		\centering
		\includegraphics[width=0.45\textwidth]{./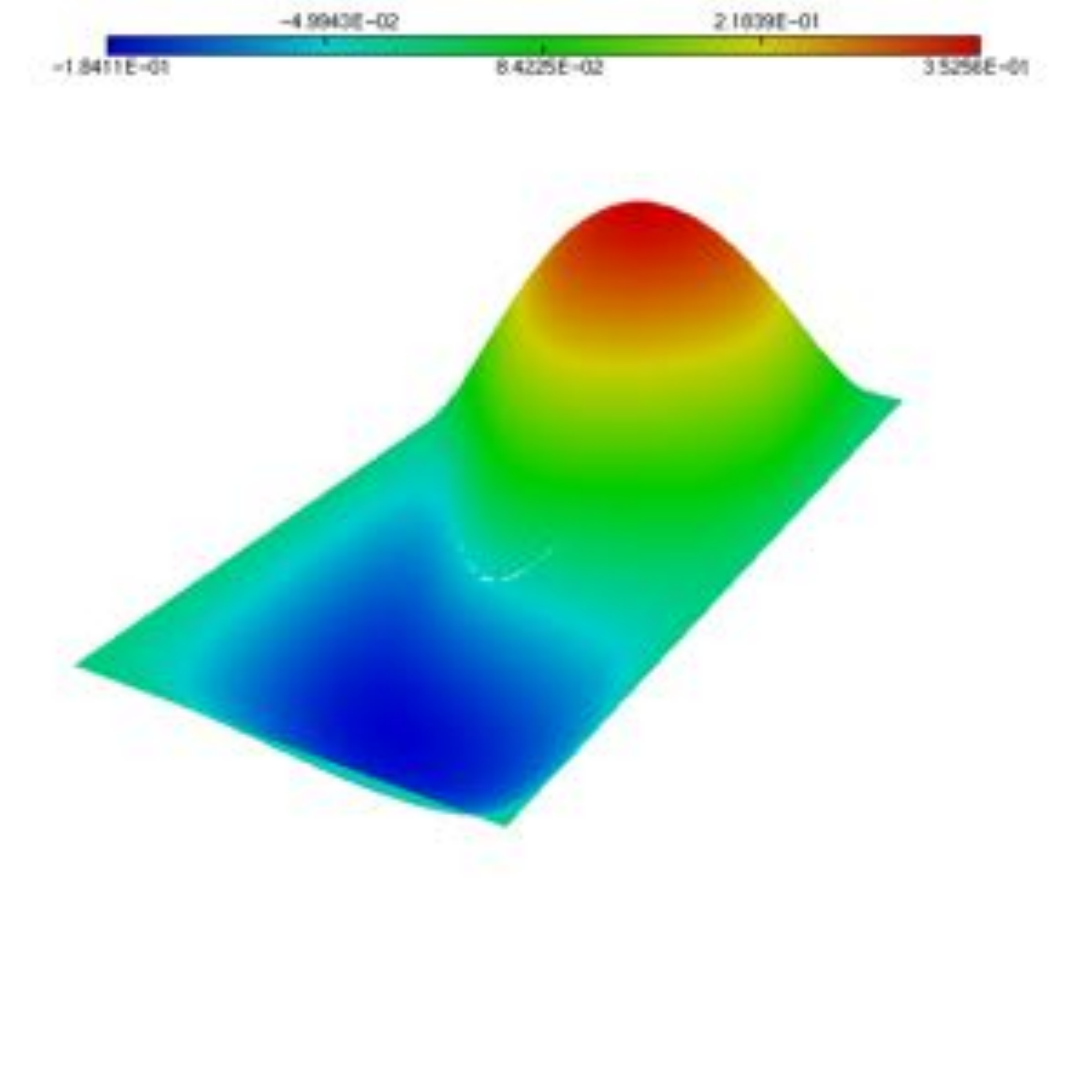}
		\includegraphics[width=0.45\textwidth]{./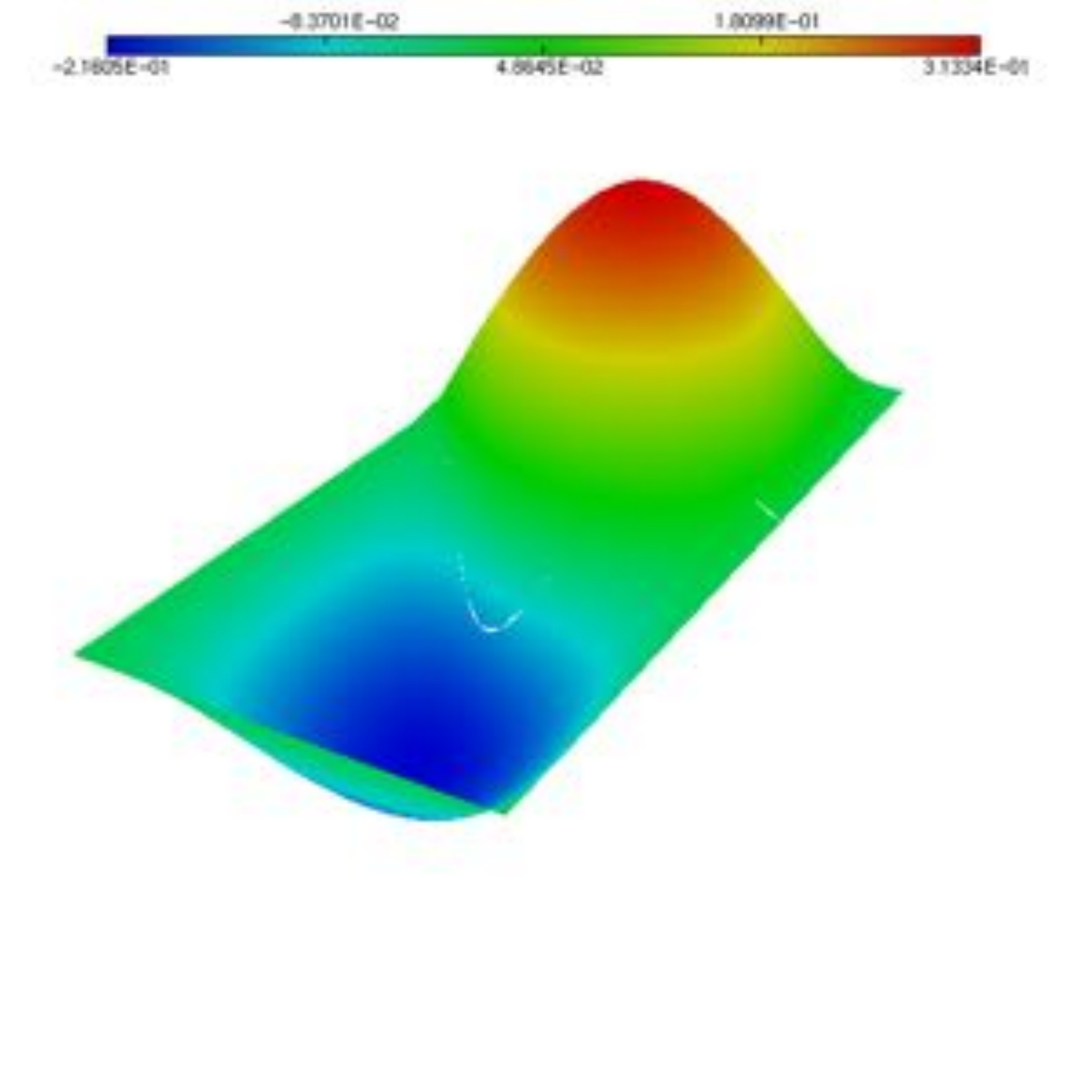}
		\caption{The converged solution of the iterative method for the discontinuous
		Lam\'e parameter test problem.  Left: x--displacement. Right: y--displacement.}
	\end{figure}
	\begin{figure}[htbp]
		\centering
		\includegraphics{./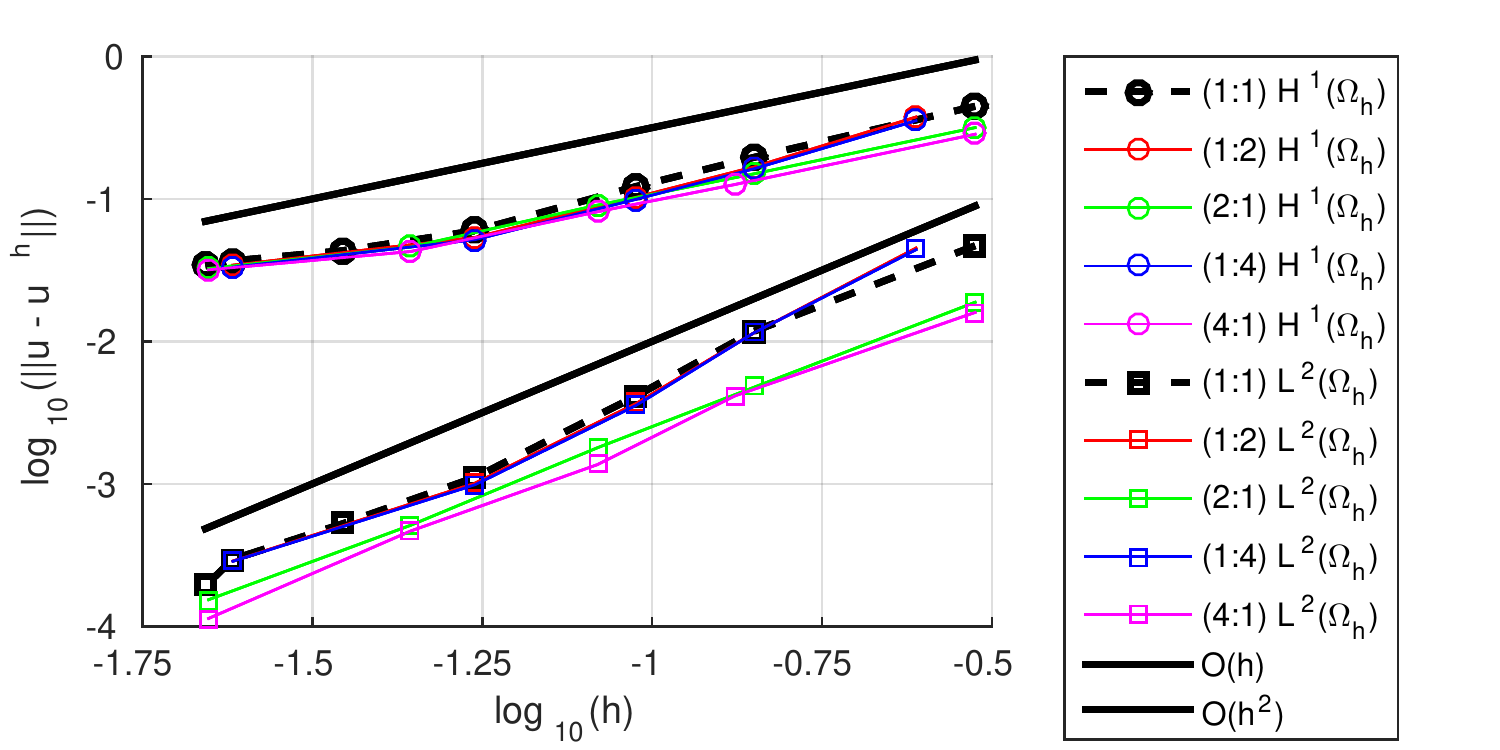}
		\caption{$H^1(\Omega_h)$ and $L^2(\Omega_h)$ convergence for the discontinuous
				Lam\'e parameter test problem.}
	\end{figure}
	\begin{table}[htbp]
        \begin{center}
        \begin{tabular}{|c|c|c|c|c|c|}
        		\hline
        		$\#(\Gamma^c_{h,s})$ & $\#(\Gamma^c_{h,m})$ & $h$ & $\|\uu - \uu^h\|_{L^2(\Omega_h)}$ &
		$\| \uu - \uu^h \|_{H^1(\Omega_h)}$ & $K$\\
		\hline
		$8$ & $8$ & $0.29873$ & $0.0469285$ & $0.446852$ & $14$ \\
		$16$ & $16$ & $0.140988$ & $0.0117446$ & $0.195434$  & $13$\\
		$25$ & $25$ & $0.0948402$ & $0.00415211$ & $0.122464$ & $12$\\
		$50$ & $50$ & $0.0548697$ & $0.00110825$ & $0.0597868$ & $14$\\
		$75$ & $75$ & $0.0350554$ & $0.000525341$ & $0.0433184$ & $15$\\
		$100$ & $100$ & $0.0241639$ & $0.000295744$ & $0.0367863$ & $15$ \\
		$125$ & $125$ & $0.0220104$ & $0.000192478$ & $0.0338451$ & $16$\\
		\hline
		\multicolumn{3}{|r|}{} 
		& $L^2(\Omega_h)$ Rate &
		\multicolumn{2}{|c|}{ 2.10} \\
		\hline
		\multicolumn{3}{|c|}{}
		&$H^1(\Omega_h)$ Rate & 
		\multicolumn{2}{|c|}{1.00} \\
		\hline
        \end{tabular}
        \caption{Different Lam\'e Parameters: Spatially Coincident Interface. }
        \end{center}
        \label{Different Lame Table 1}
        \end{table}

	\begin{table}[htbp]
        \begin{center}
        \begin{tabular}{|c|c|c|c|c|c|}
        		\hline
        		$\#(\Gamma^c_{h,s})$ & $\#(\Gamma^c_{h,m})$ & $h$ & $\|\uu - \uu^h\|_{L^2(\Omega_h)}$ &
		$\| \uu - \uu^h \|_{H^1(\Omega_h)}$ & $K$\\
		\hline
		$8$ & $16$ & $0.244511$ & $0.0451623$ & $0.375507$ & $14$ \\
		$16$ & $32$ & $0.140988$ & $0.0114271$ & $0.167331$ & $13$ \\
		$25$ & $50$ & $0.0948402$ & $0.0036823$ & $0.103027$ & $13$ \\
		$50$ & $100$ & $0.0548697$ & $0.00101088$ & $0.0536388$ & $15$ \\
		$100$ & $200$ & $0.0241639$ & $0.00028725$ & $0.0342972$ & $16$ \\
		\hline
		\multicolumn{3}{|r|}{} 
		& $L^2(\Omega_h)$ Rate &
		\multicolumn{2}{|c|}{2.01} \\
		\hline
		\multicolumn{3}{|c|}{}
		&$H^1(\Omega_h)$ Rate & 
		\multicolumn{2}{|c|}{1.04} \\
		\hline
        \end{tabular}
        \begin{tabular}{|c|c|c|c|c|c|}
        		\hline
        		$\#(\Gamma^c_{h,s})$ & $\#(\Gamma^c_{h,m})$ & $h$ & $\|\uu - \uu^h\|_{L^2(\Omega_h)}$ &
		$\| \uu - \uu^h \|_{H^1(\Omega_h)}$ & $K$\\
		\hline
		$16$ & $8$ & $0.29873$ & $0.018827$ & $0.316666$ & $19$ \\
		$32$ & $16$ & $0.148393$ & $0.00480391$ & $0.150653$ & $15$ \\
		$50$ & $25$ & $0.0830141$ & $0.00178996$ & $0.0903401$ & $14$ \\
		$100$ & $50$ & $0.0441468$ & $0.000512853$ & $0.0460626$ & $16$ \\
		$200$ & $100$ & $0.0222507$ & $0.000153844$ & $0.0325039$ & $19$ \\
		\hline
		\multicolumn{3}{|r|}{} 
		& $L^2(\Omega_h)$ Rate &
		\multicolumn{2}{|c|}{1.86} \\
		\hline
		\multicolumn{3}{|c|}{}
		&$H^1(\Omega_h)$ Rate & 
		\multicolumn{2}{|c|}{0.90} \\
		\hline
        \end{tabular}
        \caption{Different Lam\'e Parameters. Top: $(1:2)$ slave : master ratio. 
        			 Bottom: $(2:1)$ slave : master ratio.}
        \end{center}
        \label{Different Lame Table 2}
        \end{table}

	\begin{table}[htbp]
        \begin{center}
        \begin{tabular}{|c|c|c|c|c|c|}
        		\hline
        		$\#(\Gamma^c_{h,s})$ & $\#(\Gamma^c_{h,m})$ & $h$ & $\|\uu - \uu^h\|_{L^2(\Omega_h)}$ &
		$\| \uu - \uu^h \|_{H^1(\Omega_h)}$ & $K$\\
		\hline
		$8$ & $32$ & $0.244511$ & $0.044073$ & $0.354096$ & $14$ \\
		$16$ & $64$ & $0.140988$ & $0.0114338$ & $0.162604$ & $13$ \\
		$25$ & $100$ & $0.0948402$ & $0.00356991$ & $0.0996944$ & $13$\\
		$50$ & $200$ & $0.0548697$ & $0.000985668$ & $0.0517424$ & $15$ \\
		$100$ & $400$ & $0.0241639$ & $0.000286343$ & $0.0332221$ & $16$ \\
		\hline
		\multicolumn{3}{|r|}{} 
		& $L^2(\Omega_h)$ Rate &
		\multicolumn{2}{|c|}{2.21} \\
		\hline
		\multicolumn{3}{|c|}{}
		&$H^1(\Omega_h)$ Rate & 
		\multicolumn{2}{|c|}{1.04} \\
		\hline
        \end{tabular}
        \begin{tabular}{|c|c|c|c|c|c|}
        		\hline
        		$\#(\Gamma^c_{h,s})$ & $\#(\Gamma^c_{h,m})$ & $h$ & $\|\uu - \uu^h\|_{L^2(\Omega_h)}$ &
		$\| \uu - \uu^h \|_{H^1(\Omega_h)}$ & $K$\\
		\hline
		$32$ & $8$ & $0.29873$ & $0.0159833$ & $0.284411$ & $34$ \\
		$64$ & $16$ & $0.132189$ & $0.00413673$ & $0.126088$ & $40$ \\
		$100$ & $25$ & $0.0830141$ & $0.0013636$ & $0.0812393$ & $27$ \\
		$200$ & $50$ & $0.0441468$ & $0.000467162$ & $0.0427591$ & $29$ \\
		$400$ & $100$ & $0.0222507$ & $0.00011361$ & $0.0317956$ & $34$ \\ 
		\hline
		\multicolumn{3}{|r|}{}  
		& $L^2(\Omega_h)$ Rate &
		\multicolumn{2}{|c|}{1.91} \\
		\hline
		\multicolumn{3}{|c|}{}
		&$H^1(\Omega_h)$ Rate & 
		\multicolumn{2}{|c|}{0.87} \\
		\hline
        \end{tabular}
        \caption{Different Lam\'e Parameters. Top: $(1:4)$ slave : master ratio. Bottom: 
        								$(4:1)$ slave : master ratio.}
        \end{center}
        \label{Different Lame Table 3}
        \end{table}
	
	\section{Conclusion} \label{conclusion}
	In this paper, we have described a coupling formulation for coupling the equations of linear
	elasticity in the discrete setting where the discrete interface is noncoincident. In addition, we have
	proposed an iterative solution method that solves this coupling formulation. In the numerical 
	experiments presented in the numerical results section that the converged solution to our 
	iterative solution method satisfies the expected error bounds for piecewise linear finite elements.
	The numerical results presented in the previous section implies that this method is applicable
	for both domain decomposition problems and interface coupling problems where the 
	Lam\'e parameters differ on the problem subdomains. 
	

	In the monolithic formulation \eqref{compact discrete}, we have approximated the
	Dirichlet subproblem with an oblique derivative problem. We anticipate that this
	approximation may also be used to improve the error bounds of finite element
	approximations on unfitted meshes, as was done in \cite{cockburnextension}.
	In addition, we anticipate that, in the future, we will generalize this method to
	$k$--order polynomial approximation spaces and to time--dependent problems.
	A mathematical treatment on this method applied to the Poisson equation
	will be the topic of an upcoming work.
	
	\section{Appendix: Definition of the Zhang--Naga Gradient Recovery Operator}
Let us first define the notion of an element patch.
Let $z_0 \in \mathcal N$ be a node in $\overline{\mathcal T}_{s,h}$ and $T\in \overline{\mathcal T}_{s,h}$ be a triangle in $\overline{\mathcal T}_{s,h}$, then we can 
define an initial element patch as the following
\begin{equation}
	\mathcal K^0_{z_0} = \left\{ T\in \overline{\mathcal T}_{s,h}; z_0 \in T  \right\}.
\end{equation}
We can define the next level element patch as the following
\begin{equation}
	\mathcal K^1_{z_0} = \left\{ T \in \overline{\mathcal T}_{s,h}; z_{\mathcal K^0_{z_0}} \in T \right\},
\end{equation}
where $ z_{\mathcal K^0_{z_0}}$ is a node that belongs to the triangles of $K^0_{z_0}$.
Through induction, we can set the $i^{th}$ level patch as
\begin{equation}
	\mathcal K^i_{z_0} = \left\{ T \in\overline{\mathcal T}_{s,h}; z_{\mathcal K^{i-1}_{z_0}} \in T \right\},
\end{equation}
where $z_{\mathcal K^{i-1}_{z_0}}$ is a node that belongs to the triangles of $K^{i-1}_{z_0}.$
We define the \textbf{element patch} as 
\begin{equation}
	\mathcal K_{z_0} = \left\{ \mathcal K^i_{z_0}; \textrm{card}(\mathcal K^i_{z_0}) > 6
						\textrm{ and } \textrm{card}(\mathcal K^{i-1}_{z_0}) \leq 6 \right\},
\end{equation}
where, $\textrm{card}(\cdot)$ denotes the number of nodes in a set.
Having defined an element patch, we can now define the local gradient recovery operator
$G^h: V^h_s \rightarrow V^h_s \times V^h_s$. Let $z_0$ and $\mathcal K_{z_0}$ again be 
a node in $\overline{\mathcal T}_{s,h}$ and its associated element patch respectively. Let $Z$ be
 the number of nodes in $\mathcal K_{z_0}$ excluding $z_0$.
In addition, let $z_i$, $i\in [0, Z]$ be the nodes of the element patch and $(x_i, y_i)$ be 
the xy--coordinates of $z_i$ and $(\hat x_i, \hat y_i) = (x_i - x_0, y_i - y_0)$ be the
coordinates centered around $z_0$. Define
\begin{equation}
	A_{z_0} = 
	\left[
	\begin{array}{c}
		\mathbf {\hat x}_0^T \\
		 \mathbf {\hat x}_1^T \\
		\vdots \\
		\mathbf {\hat x}_Z^T
	\end{array}
	\right],\ 
	\mathbf {\hat x}_i = 
	\left[
	\begin{array}{c}
		1 \\
		\hat x_i \\
		\hat y_i \\
		\hat x_i^2 \\
		\hat x_i \hat y_i\\
		\hat y_i^2
	\end{array}
	\right],\ 
	\mathbf c_{z_0} = 
	\left[
	\begin{array}{c}
		c_0 \\
		c_1 \\
		\vdots \\
		c_5
	\end{array}
	\right], \ 
	\textrm{ and } 
	\mathbf d_{z_0} = 
	\left[
	\begin{array}{c}
		v^h(z_0) \\
		v^h(z_1) \\
		\vdots \\
		v^h(z_Z)
	\end{array}
	\right].
\end{equation}
The discrete quadratic least--squares fitting of a function
$v^h \in V^h_s$ over the patch $\mathcal K_{z_0}$ can be performed by
solving the least--squares problem
\begin{equation}
	A_{z_0}^T A_{z_0} \mathbf c_{z_0} = A_{z_0}^T \mathbf d_{z_0}
\end{equation}
for the coefficients $\mathbf c_{z_0}$ of the approximating polynomial
and setting
\begin{equation}
	p_{z_0}(x,y) := \mathbf c_{z_0}^T \mathbf P,
\end{equation}
where $\mathbf P = \left[ 1, x-x_0, y-y_0, (x - x_0)^2, (x-x_0)(y-y_0), (y-y_0)^2\right]^T$. 
Now, we define $G^h_{z_0}$, the local gradient recovery operator defined at node $z_0$
as 
\begin{equation}
	G^h_{z_0} u^h(z_0) := \nabla p_{z_0}(x_0, y_0).
\end{equation}
The global gradient recovery operator $G^h$ is a composite of all the local gradient recovery
operators; for each node $z_0 \in \mathcal N$, set the nodal degree of freedom of 
$V^h_s \times V^h_s$ over $\overline {\mathcal T}_{s,h}$ to $G^h_{z_0} v^h(z_0)$.
\begin{figure}
	\centering
	\resizebox{0.1\textheight}{!}{
	\input{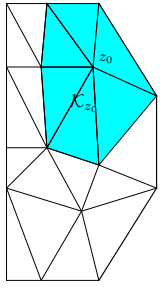}
	}
	\caption{An illustration of the element patch of a node $z_0 \in \overline{\mathcal T}_{s,h}$.}
\end{figure}
\begin{remark}
Computational implementation of $G^h$ is relatively simple due to the fact that
$\nabla p_{z_0}(x_0, y_0) = [c_1, c_2]^T$ at every node $z_0\in \mathcal N$.
\end{remark}
\nocite{*}
\bibliography{bibliography}
\end{document}